\renewcommand{\d}{\mathrm{d}}
\newcommand{\D}{\mathrm{D}}
\newcommand{\e}{\epsilon}
\newcommand{\Z}{\mathbb{Z}}
\newcommand{\R}{\mathbb{R}}
 \journalname{Numerical Algorithms}
\begin{document}

\newsavebox{\smlmata}
\savebox{\smlmata}{${\scriptstyle\overline q(0)=\left(\protect\begin{smallmatrix}0.2\\ 0.1\protect\end{smallmatrix}\right) = \overline q(5)}$}
\newsavebox{\smlmatb}
\savebox{\smlmatb}{${\scriptstyle q =\left(\protect\begin{smallmatrix} -1&2\\3&1\protect\end{smallmatrix}\right) \overline q}$}
\newsavebox{\smlmatc}
\savebox{\smlmatc}{${\scriptstyle p(0)=\left(\protect\begin{smallmatrix}0.85\\ 1.5\protect\end{smallmatrix}\right) = p(2 \pi/3)}$}

\title{Symplectic integration of boundary value problems
}


\author{Robert I McLachlan         \and
        Christian Offen 
}


\institute{R. McLachlan \at
              Institute of Fundamental Sciences\\Massey University\\Palmerston North\\New Zealand\\
              \email{r.mclachlan@massey.ac.nz}           
           \and
           C. Offen \at
              Institute of Fundamental Sciences\\Massey University\\Palmerston North\\New Zealand\\
             	ORCiD: 0000-0002-5940-8057\\
			\email{c.offen@massey.ac.nz}
}




\maketitle

\begin{abstract}
Symplectic integrators can be excellent for Hamiltonian initial value problems. Reasons for this include
their preservation of invariant sets like tori, good energy behaviour, nonexistence of attractors,
and good behaviour of statistical properties. These all refer to {\em long-time} behaviour. They are
directly connected to the dynamical behaviour of symplectic maps $\varphi\colon M\to M$
on the phase space under iteration. Boundary value problems, in contrast, are posed for fixed (and often quite short)
times. Symplecticity manifests as a symplectic map $\varphi\colon M\to M'$ which is not iterated.
Is there any point, therefore, for a symplectic integrator to be used on a Hamiltonian boundary value problem? 
In this paper we announce results that symplectic integrators preserve bifurcations of Hamiltonian boundary value problems and that nonsymplectic integrators do not.
\keywords{Hamiltonian boundary value problems \and
bifurcations \and
periodic pitchfork \and
symplectic integration \and
geometric integration \and
singularity theory \and
catastrophe theory}
 \subclass{65L10  \and 65P10 \and 65P30 \and 37M15}	
\end{abstract}

\section{Motivation and introduction}

\subsection{The Bratu problem - an example of Hamiltonian boundary value problem}\label{subsec:Bratu}

As an instance of a Hamiltonian boundary value problem, let us consider the well-studied Bratu problem \cite{Mohsen201426}. In the one-dimensional case the Bratu problem refers to the steady-state solutions occurring in the following reaction-diffusion model of combustion
\[u_t = u_{xx}+Ce^u, \qquad u(t,0)=0=u(t,1).\]
Here $C > 0$ is a parameter. Steady-state solutions $x \mapsto u(x)$ fulfil the Dirichlet boundary value problem
\begin{equation}\label{eq:BratuODE}
u_{xx}+Ce^u=0, \quad u(0)=0=u(1).
\end{equation}
The left plot in figure \ref{fig:BratuSol} shows the two solutions for $C=1.5$. As $C$ increases the two solutions merge at a critical value $C^\ast \approx 3.513830719$ \cite[p.27]{Mohsen201426}. For $C>C^\ast$ no steady-state solutions exist. This can be seen in the bifurcation diagram displayed in the centre of figure \ref{fig:BratuBifur} where the $L^2$ norm of solutions is plotted against $C$.
\begin{figure}
\begin{center}
\includegraphics[width=0.32\textwidth]{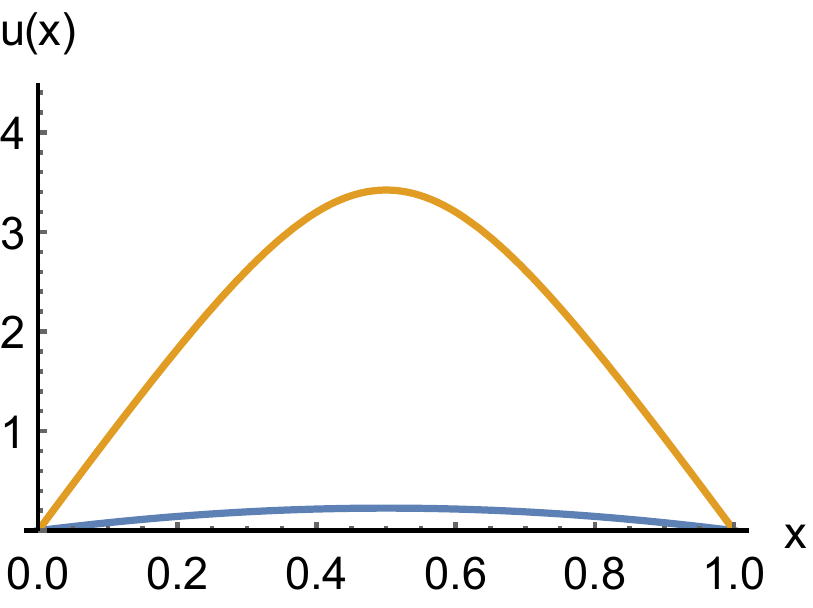}
\includegraphics[width=0.32\textwidth]{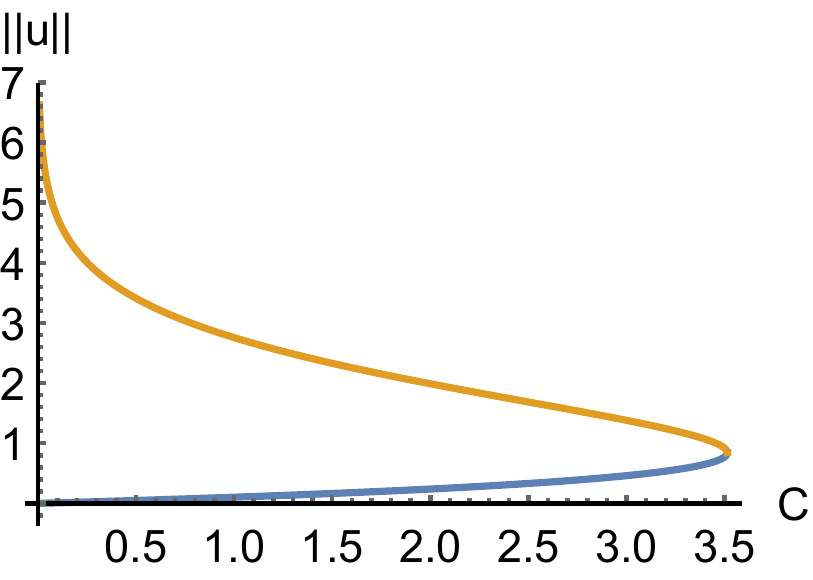}
\includegraphics[width=0.32\textwidth]{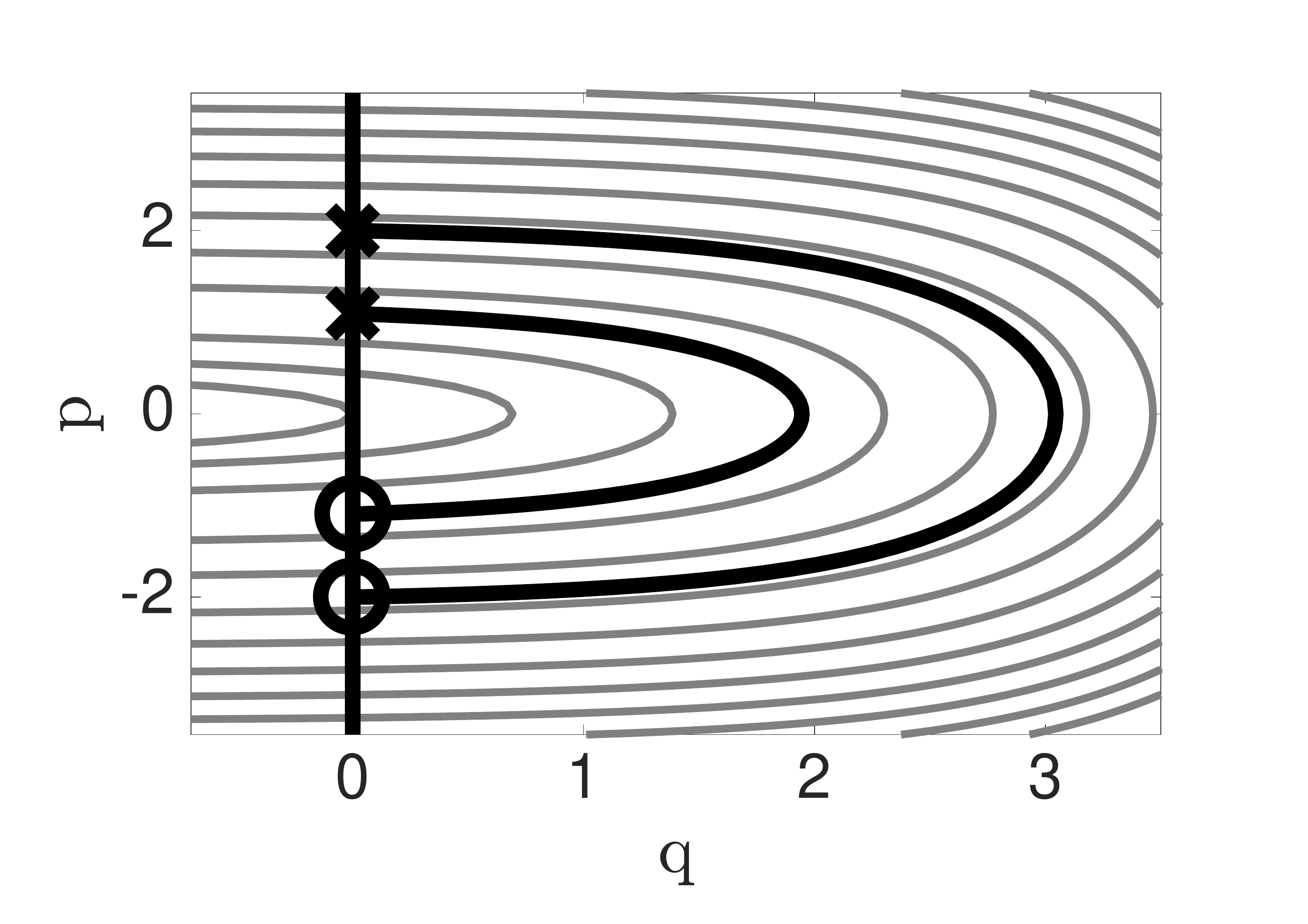}
\end{center}
\caption{The Bratu problem. Left plot: solutions to \eqref{eq:BratuODE} for $C=1.5$. Centre plot: bifurcation diagram to \eqref{eq:BratuODE} showing a fold bifurcation at $C\approx 3.51$. Right plot: Illustration of \eqref{eq:BratuODE} as a boundary value problem for the Hamiltonian system \eqref{eq:HamBratu}.}\label{fig:BratuSol}\label{fig:BratuBifur}\label{fig:BratuPhase}
\end{figure}
Let us view \eqref{eq:BratuODE} as a boundary value problem for a Hamiltonian system: consider the standard symplectic structure $\d q \wedge \d p$ on $T^\ast\R \cong \R^2$ and the Hamiltonian function
$H(q,p)=\frac 12 p^2 + C e^q$.
Hamilton's equations
\begin{align}\label{eq:HamBratu}
\dot q &= \; \; \, \nabla_p H(q,p) = p\\ \nonumber
\dot p &= -\nabla_q H(q,p) = - C e^q
\end{align}
provide a first-order formulation of the ODE \eqref{eq:BratuODE}. The boundary condition translates to $q(0)=0=q(1)$. The plot to the right in figure \ref{fig:BratuPhase} illustrates the boundary value problem in a phase portrait for the Hamiltonian system. The line $\Lambda=\{(0,p)\,|\, p \in \R\}$ corresponds to the homogeneous Dirichlet boundary condition. The boundary value problem is fulfilled if a motion starts on $\Lambda$ and returns to $\Lambda$ after time 1. For a value $C \in (0,C^\ast)$ two motions fulfilling the boundary value problem are illustrated as black curves starting at $\times$ and ending at $o$ in the plot.
%

\subsection{Purpose of the paper}\label{subsec:purpose}
To gain a good understanding of a parameter dependent boundary value problem a successful computation of the bifurcation diagram is necessary.
To draw valid conclusions from numerical results one has to make sure that the bifurcations in the boundary value problem for the exact flow are still present when the exact flow is perturbed by a numerical integrator and that no artificial bifurcations are introduced. It is, therefore, essential to
\begin{itemize}
\item
understand which kind of bifurcations can occur in a given problem class 
\item
and how to capture them numerically.
\end{itemize}

Moreover, bifurcations of high codimension act as \textit{organising centres} in the bifurcation diagram \cite[Part I, Ch.7]{Gilmore1993catastrophe}. This means a high codimensional bifurcation determines which bifurcations happen in a neighbourhood of the singular point.
It is, therefore, desirable to capture these correctly. Furthermore, bifurcation diagrams are often calculated using \textit{continuation methods}: a branch of bifurcations is followed numerically to find a bifurcation of higher codimension but these can only be detected correctly if they are not broken in the numerical boundary value problem. We conclude that preservation of the bifurcation behaviour is not only a goal in its own right but also crucial for computations.

The Bratu example is an instance of a {\em Lagrangian boundary value problem} for a Hamiltonian system. Indeed, it is a boundary value problem for the symplectic time-1-map of a Hamiltonian flow. Next to Dirichlet boundary conditions, Neumann-, Robin- and periodic boundary conditions are Lagrangian boundary conditions. We will refer to this class as {\em Hamiltonian boundary value problems}.
The authors attack the first task of the above bullet point list in \cite{bifurHampaper} linking bifurcations occurring in smooth parameterised families of Lagrangian boundary value problems for symplectic maps with catastrophe theory \cite{Arnold1,Gilmore1993catastrophe,lu1976singularity}. This applies to generic settings as well as to settings with ordinary or reversal symmetries. The conformal-symplectic symmetric case, which applies to homogeneous Hamiltonians and to the geodesic bifurcation problem in particular, is studied in \cite{obstructionPaper}.

Relevant to this paper are $A$-series bifurcations and $D$-series bifurcations. $A$-series bifurcations 
can be modelled as the qualitative change of the solution set to $\nabla g_\mu(x) =0$ with $g_\mu(x) = x^{n+1} + \sum_{j=1}^{n-1} \mu_j x^j$ as the parameter $\mu$ is varied. They are denoted by $A_n$ and the first instances are called {\em fold} ($n=2$), {\em cusp} ($n=3$), {\em swallowtail} ($n=4$). The first two $D$-series bifurcations are given by $ g_\mu(x,y)=x^3\pm xy^2+\mu_3 (x^2+y^2)+\mu_2 y + \mu_1 x$. They are denoted by $D_4^{\pm}$ and are called {\em hyperbolic umbilic bifurcation} ($D_4^{+}$) and {\em elliptic umbilic bifurcation} $D_4^{-}$. The bifurcations $A_2$, $A_3$, $A_4$, $D_4^+$, $D_4^-$ are (in an appropriate equivalence relation) the only bifurcations which occur in generic Hamiltonian boundary value problems with up to three parameters \cite{bifurHampaper}.

The purpose of this paper is to announce results the authors obtained for the second objective of the bullet point list, {i.e.\ the preservation of bifurcations under discretisation.}

Symplecticity in Hamiltonian boundary value problems does not seem to have been addressed in the literature, even in very detailed numerical studies like \cite{Beyn2007,GalanVioque2014}.
The AUTO software \cite{AutoManual} is based on Gauss collocation, which is symplectic when the equations are presented in canonical variables. The two-point boundary-value codes MIRKDC \cite{MIRKDC}, TWPBVP \cite{TWPBVP} and TWPBVPL \cite{BashirAli199859,Cash200681} are based on non-symplectic Runge-Kutta methods. MATLAB’s {\tt bvp4c} uses 3-stage Lobatto IIIA \cite{KierzenkaBvP4c}, which is not symplectic. Note that symplectic integration sometimes requires the use of implicit methods. For initial value problems, these are typically computationally more expensive than explicit methods. However, for boundary value problems solved in the context of parameter continuation, this distinction largely disappears as excellent initial approximations are available.
Other approaches like \cite{grass2015optimal,UECKER2016} use the code TOM \cite{Mazzia2006,MAZZIA2009723,Mazzia2004}. Moreover, Hamiltonian boundary value methods, designed to preserve Hamiltonians up to any fixed polynomial order, can be used where energy conservation is essential \cite{Amodio2015,BRUGNANO2015650}.


\section{Broken gradient-zero bifurcations}\label{sec:brokenBifur}


\begin{definition}[symplectic integrator]
A symplectic integrator assigns to a time-step-size $h>0$ (discretisation parameter) and a Hamiltonian system a symplectic map which approximates the time-$h$-map of the Hamiltonian flow of the system (which is symplectic).
\end{definition}

\begin{remark}
For a finite sequence of positive time-step-sizes $h_1,\ldots,h_N$ summing to $\tau$ the composition of all time-$h_j$-map approximations obtained by a symplectic integrator yields an approximation to the Hamiltonian-time-$\tau$-map, which is a symplectic map.
\end{remark}

The solutions to a family of Hamiltonian boundary value problems on $2n$-dimensional manifolds locally corresponds to the roots of a family of $\R^{2n}$-valued function defined on an open subset of $\R^{2n}$. For a Hamiltonian boundary value problems these maps are exact, i.e.\ each arises as the gradient of a scalar valued map \cite{bifurHampaper}. Consider a family of Hamiltonian boundary value problems
and consider an approximation of the Hamiltonian-time-$\tau$-map by an integrator. Roughly speaking, two map-families are (right-left-) equivalent if they coincide up to reparametrisation and parameter dependent changes of variables in the domain and target space.
If the family of maps corresponding to the approximated problems is equivalent to the family of maps for the exact problem then we say \textit{the integrator preserves the bifurcation diagram of the problem}.


\begin{prop}\label{prop:capturebifur}
A symplectic integrator with any fixed (but not necessarily uniform) step-size, applied to any autonomous or nonautonomous Hamiltonian boundary value problem, preserves bifurcation diagrams of generic bifurcations of any codimension for sufficiently small maximal step-sizes.
\end{prop}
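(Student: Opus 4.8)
The plan is to reduce the statement to a structural stability property of versal unfoldings and to exploit the one structural feature that a symplectic integrator, by definition, retains: it replaces the exact symplectic time-$\tau$-map $\varphi_\tau$ by \emph{another symplectic map} $\varphi_\tau^h$. Consequently the construction of \cite{bifurHampaper} that presents the solution set of a Hamiltonian boundary value problem as the critical set of a scalar generating function applies verbatim to the discretised problem. So the first step is to recall that reduction: the Lagrangian boundary conditions together with $\varphi_\tau$ determine two Lagrangian submanifolds whose intersection points are the solutions of the boundary value problem; near a solution this intersection is represented by a scalar function $\mathcal S_\mu$ on (an open subset of) $\R^{2n}$ whose gradient zeros are exactly the solutions, and the parameter $\mu$ turns $\mathcal S_\mu$ into an unfolding. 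Feeding $\varphi_\tau^h$ into the same construction yields an unfolding $\mathcal S_\mu^h$, and ``the integrator preserves the bifurcation diagram'' becomes precisely the assertion that, near each bifurcation point, $\mathcal S_\mu^h$ is right-left equivalent to $\mathcal S_\mu$ as an unfolding.

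\textbf{Convergence of the generating functions.} The second step is the estimate $\mathcal S_\mu^h \to \mathcal S_\mu$. A symplectic integrator is by definition consistent, and since the Hamiltonian vector field and the integrator are smooth and the time interval $[0,\tau]$ is fixed and compact, a standard Gronwall-type argument applied to the scheme and to its variational equations gives $\varphi_\tau^h \to \varphi_\tau$ in $C^r$ on compact subsets of phase space, uniformly for $\mu$ in a compact parameter set, for every $r$. The passage from the symplectic map to the generating function is a finite composition of smooth operations — push-forward of the boundary Lagrangian, local graph representation whose solvability is guaranteed by transversality and the Lagrangian property, restriction and substitution — so $C^r_{\mathrm{loc}}$-closeness of $\varphi_\tau^h$ to $\varphi_\tau$ transfers to $C^r_{\mathrm{loc}}$-closeness of $\mathcal S_\mu^h$ to $\mathcal S_\mu$, again uniformly in $\mu$. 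One has to keep in mind that $\varphi_\tau$ is far from the identity, so the generating function is only defined locally and must be patched; this is handled exactly as in \cite{bifurHampaper} by working in a fixed neighbourhood of the bifurcation point, and the patching is unaffected by the perturbation. The ``not necessarily uniform step-size'' clause is free: a finite sequence $h_1,\dots,h_N$ of positive steps summing to $\tau$ still composes to a symplectic map converging to $\varphi_\tau$ as $\max_j h_j \to 0$, so the same estimate applies with $\max_j h_j$ in place of $h$.

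\textbf{Structural stability of generic bifurcations.} The third step invokes singularity theory. ``Generic bifurcation of codimension $k$'' (in the sense of \cite{bifurHampaper}) means that at the bifurcation point $\mathcal S_\mu$ is equivalent to a versal unfolding of a finitely determined function singularity, equivalently that a finite list of transversality conditions in a jet space of some finite order $r=r(k)$ holds. These conditions are open, hence satisfied by any unfolding whose $r$-jet is sufficiently close, and a versal unfolding is stable under such perturbations: every $C^r_{\mathrm{loc}}$-close unfolding is right-left equivalent to it. Combining this with the previous step, for all sufficiently small maximal step-size the unfolding $\mathcal S_\mu^h$ is $C^{r(k)}_{\mathrm{loc}}$-close to $\mathcal S_\mu$, uniformly in $\mu$, hence equivalent to it, so the discretised and exact bifurcation diagrams agree near the bifurcation point.

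\textbf{Main obstacle.} The delicate part is the convergence step carried out in a strong enough topology. Bifurcations of arbitrarily high codimension force us to control jets of order $r(k)\to\infty$, so the argument really depends on a fixed-order smooth symplectic integrator over a fixed time interval converging to the exact flow in $C^r_{\mathrm{loc}}$ for \emph{every} $r$ (not merely $C^0$ or $C^1$), uniformly in the parameter, and on the generating-function construction being a $C^r$-continuous operation with the local patching kept under uniform control. Nailing down these $C^r$-estimates (by differentiating the scheme and the variational system and applying Gronwall bounds on the fixed interval) and verifying the continuity and patching of the generating-family construction is where the actual work lies; the singularity-theoretic input is then essentially off the shelf.
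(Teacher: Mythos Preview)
Your proposal is correct and follows essentially the same approach as the paper: generic singularities in Lagrangian boundary value problems for symplectic maps are non-removable under small symplectic perturbations (the structural stability of versal unfoldings of the generating family that you spell out), and a symplectic integrator with small maximal step-size furnishes exactly such a small symplectic perturbation of the exact time-$\tau$-map. The paper's own proof is a two-sentence sketch that defers the analytic details you correctly flag as the main obstacle (the $C^r$-convergence of the generating families and the uniform control of the local patching) to its companion references, so your elaboration is precisely the content that sketch is pointing to.
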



\begin{proof}
All generically occurring singularities in Lagrangian boundary value problems for symplectic maps are non-removable under small symplectic perturbations of the map. The statement follows because a Hamiltonian diffeomorphism which is slightly perturbed by a symplectic integrator is a symplectic map near the exact flow map. 
\end{proof}


Proposition \ref{prop:capturebifur} implies that using a symplectic integrator to solve Hamilton's equations in order to solve a Lagrangian boundary value problem we obtain a bifurcation diagram which is qualitatively correct even when computing with low accuracy and not preserving energy.
In contrast, nonsymplectic integrators do not preserve all bifurcations, even for arbitrary small step-sizes. However, they do preserve the simplest class of $A$-series bifurcations, i.e.\ folds, cusps, swallowtails, butterflies,....

\begin{prop}\label{prop:Aseriespersist} 
A non-symplectic integrator with any fixed (but not necessarily uniform) step-size, applied to any autonomous or nonautonomous Hamiltonian Lagrangian boundary value problem, preserves bifurcation diagrams of generic $A$-series singularities for sufficiently small maximal step-sizes. However, each non-symplectic integrator breaks the bifurcation diagram of all generic $D$-series singularities for any positive maximal step-size.
\end{prop}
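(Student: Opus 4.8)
\noindent\emph{Plan.} My plan is to reduce both halves to a single dichotomy in singularity theory together with one structural observation about non-symplectic discretisations. By \cite{bifurHampaper} the solutions of the family of Hamiltonian boundary value problems are locally the zeros of a parameter-dependent map $G_\mu\colon U\subseteq\R^{2n}\to\R^{2n}$ which is \emph{exact}, $G_\mu=\nabla_x S_\mu$; replacing the exact time-$\tau$ flow by an integrator $\Phi_h$ built from steps of sizes $h_1,\dots,h_N$ with $\sum_j h_j=\tau$ replaces $G_\mu$ by some $G_\mu^h$ with $\|G_\mu^h-G_\mu\|_{C^k}\to 0$ as $\max_j h_j\to 0$, for every $k$. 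The observation I would isolate first is that $G_\mu^h$ is again exact precisely when $\Phi_h$ preserves the symplectic form $\omega$ along the Lagrangian submanifold $\Lambda$ carrying the boundary data, i.e.\ when the closed two-form $\Phi_h^{*}\omega-\omega$ restricts to zero on $\Lambda$. For a symplectic integrator this holds identically (that is Proposition~\ref{prop:capturebifur}); the present proposition asserts that the loss of exactness produced by a non-symplectic integrator is invisible to $A$-series singularities but fatal to $D$-series ones.

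\noindent\emph{$A$-series persist.} At an $A_n$ point $\D G_\mu$ has corank one, so a Lyapunov--Schmidt reduction brings the bifurcation equation to a scalar equation $g_\mu(x)=0$, $x\in\R$. On the line exactness is vacuous --- every scalar function is a derivative --- so the reduced problem, its classification, and the versal unfolding $x^{n+1}+\sum_{j=1}^{n-1}\mu_j x^j$ of $A_n$ are the same whether or not $G_\mu$ is required to be a gradient. I would then invoke finite determinacy and versality: there exist a finite $k$ and a $C^k$-neighbourhood of $G_\mu$, taken inside the space of \emph{all} (not necessarily exact) parameter families, every member of which is right-left equivalent to $G_\mu$. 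Since $G_\mu^h$ enters that neighbourhood once $\max_j h_j$ is small enough, the bifurcation diagram is preserved; symplecticity of $\Phi_h$ is nowhere used.

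\noindent\emph{$D$-series break.} A $D_4^{\pm}$ point is a corank-two singularity: $\D G_\mu$ has corank two and the reduced bifurcation equation is $\nabla g_\mu(x,y)=0$ with $(x,y)\in\R^2$ and $g_\mu(x,y)=x^3\pm xy^2+\mu_3(x^2+y^2)+\mu_2 y+\mu_1 x$. The reduced map $\nabla g_\mu\colon\R^2\to\R^2$ is a gradient map, so its Jacobian $\Hess g_\mu$ is symmetric, and this is essential: by Whitney's theorem the only stable singularities of maps $\R^2\to\R^2$ are folds and cusps, while $\Sigma^2$ has codimension four, so a generic family of arbitrary maps with at most three parameters has no corank-two point at all. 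Hence any perturbation of $G_\mu$ that leaves the class of gradient maps either destroys the corank-two point or turns it into a singularity not right-left equivalent to $D_4^{\pm}$. It then remains to check that a non-symplectic $\Phi_h$ really does leave that class, and here I would use the first paragraph: $\Phi_h$ non-symplectic means $\Phi_h^{*}\omega-\omega\not\equiv0$, and the $D_4^{\pm}$ point of the exact problem survives small perturbations of $\Lambda$ within the Lagrangian category, so one can arrange $\Lambda$ generic enough that $(\Phi_h^{*}\omega-\omega)|_\Lambda\ne0$ while retaining the $D_4^{\pm}$; then $G_\mu^h$ is a non-exact perturbation of a $D_4^{\pm}$ family and hence inequivalent to it, and this holds for \emph{every} positive maximal step-size, not merely in a limit.

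\noindent\emph{Expected obstacle.} The $A$-series half should be essentially bookkeeping once the determinacy and versal-unfolding machinery of \cite{Arnold1,lu1976singularity,Gilmore1993catastrophe} is in hand. The real work is in the $D$-series half: I would need to (i) make precise, and uniform in $\mu$, the equivalence ``$G_\mu^h$ exact $\Leftrightarrow(\Phi_h^{*}\omega-\omega)|_\Lambda=0$'' and control how the Lyapunov--Schmidt reduction onto the $(x,y)$-plane interacts with the discretisation, and (ii) upgrade ``non-exact perturbation of a $D_4^{\pm}$'' to ``genuinely inequivalent unfolding'', ruling out that the particular perturbation generated by $\Phi_h$ conspires to land on another $D_4$-type family. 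Item (ii) --- a clean genericity statement over the choice of boundary data and of the problem within the $D_4^{\pm}$ stratum --- is what I expect to be the crux.
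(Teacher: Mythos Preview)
Your proposal is correct and follows essentially the same route as the paper: reduce the boundary value problem to a root-finding problem for a map $G_\mu$ which is a gradient, observe that the discretised map $G_\mu^h$ is again a gradient if and only if the integrator is symplectic, and then invoke the singularity-theoretic dichotomy that $A$-series singularities are stable in the full category of maps whereas $D$-series singularities require the gradient constraint to persist. The paper's own proof is considerably terser than yours---it simply asserts the $A$-series stability ``in the roots-of-a-function'' setting and defers the $D$-series decomposition entirely to \cite{numericalPaper}---so your Lyapunov--Schmidt/corank-one explanation for the $A$-half and the Whitney/codimension-four argument for the $D$-half are genuine elaborations of what the paper leaves implicit, not a different strategy. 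The obstacle you single out (ruling out that the particular non-exact perturbation produced by $\Phi_h$ accidentally lands back on a $D_4^\pm$ stratum, uniformly over generic boundary data) is precisely the content the paper outsources to \cite{numericalPaper}, so you have correctly located where the real work lies.
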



\begin{proof}
Passing to a generating function of the Lagrangian boundary value problem, as explained in  \cite{bifurHampaper}, solutions locally correspond to the roots of an $\R^{2n}$-valued function $F$ defined on an open subset of $\R^{2n}$ where $F$ arises as the gradient of a scalar valued map.
A discretisation of the flow map corresponds to a smooth perturbation $\tilde F$ of $F$. The perturbed map $\tilde F$ arises as the gradient of a scalar valued map if and only if the discretisation of the flow is symplectic.
$A$-series bifurcations are stable in the roots-of-a-function and, therefore, persists under any small, smooth perturbation.
$D$-series singularities, however, decompose into $A$-series singularities under arbitrarily small smooth perturbations, which do not respect the gradient structure of the problem \cite{numericalPaper}.
\end{proof}

\begin{remark}
For the fold bifurcation in the Bratu problem (figure \ref{fig:BratuBifur}) the proposition says that any integrator with fixed step-size will capture the bifurcation correctly, i.e.\ the obtained bifurcation diagram will qualitatively look the same as the plot in the centre of figure \ref{fig:BratuBifur}.
\end{remark}

\subsection{Breaking of hyperbolic and elliptic umbilic bifurcations}\label{subsec:breakbifur}


Let us take a closer look at the first two $D$-series bifurcations and obtain models for how $D$-series bifurcations in Hamiltonian boundary value problems break when using a non-symplectic integrator.

A universal unfolding of the hyperbolic umbilic singularity $D_4^+$ with parameter $\mu$ is given by
\[g_\mu(x,y)=x^3+xy^2+\mu_3 (x^2+y^2)+\mu_2 y + \mu_1 x.\]
The plot to the left in figure \ref{fig:breakD4p} shows the level bifurcation set to the problem $\nabla g_\mu(x,y)=0$. It consists of those points $\mu$ in the parameter space for which a bifurcation occurs in the phase space, i.e.\ there exists a point $(x,y)$ in the phase space such that $\nabla g_\mu(x,y)=0$ and $\det \mathrm{Hess}\, g_\mu(x,y) =0$.
The plot to the right shows the level bifurcation set to the perturbed problem $\nabla g_\mu (x,y) +f_{\epsilon}(x,y) =0$
%
for $\e \not=0$ near $0$ and a smooth family of maps $f_{\epsilon}\colon \R^2\to\R^2$ with $f_0=0$ such that $f_{\epsilon} \not= \nabla h_\e$ for any $h_\e \colon \R^2\to \R$ unless $\e =0$. Here $\D (\nabla g_\mu + f_{\epsilon})(x,y)$ denotes the Jacobian matrix of the map $(x,y) \mapsto  (\nabla g_\mu + f_{\epsilon})(x,y)$.

Each point in the sheets corresponds to a fold singularity ($A_2$) and points on edges to cusp singularities ($A_3$). At parameter values where the sheets self-intersect there are two simultaneous fold singularities in the phase space. In the unperturbed system two lines of simultaneous folds merge with a line of cusps to a hyperbolic umbilic point \cite[I.5]{Gilmore1993catastrophe}. In the perturbed picture the line of cusps breaks into three segments and two swallowtail points ($A_4$) occur where two lines of cusps merge with a line of simultaneous folds. Notice that there are no swallowtail points in the unperturbed level bifurcation set.

\begin{figure}
\begin{center}
\includegraphics[width=0.49\textwidth]{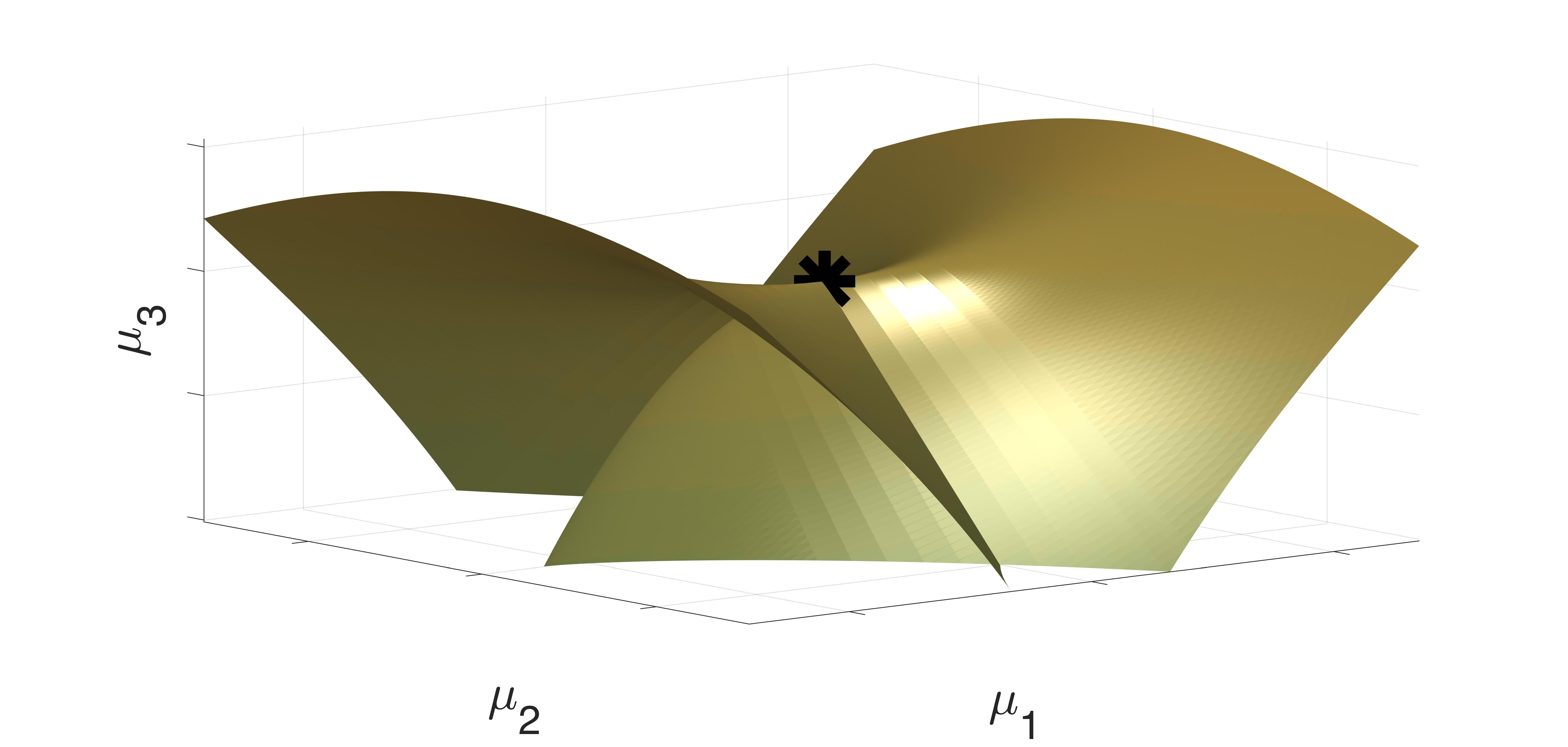}
\includegraphics[width=0.49\textwidth]{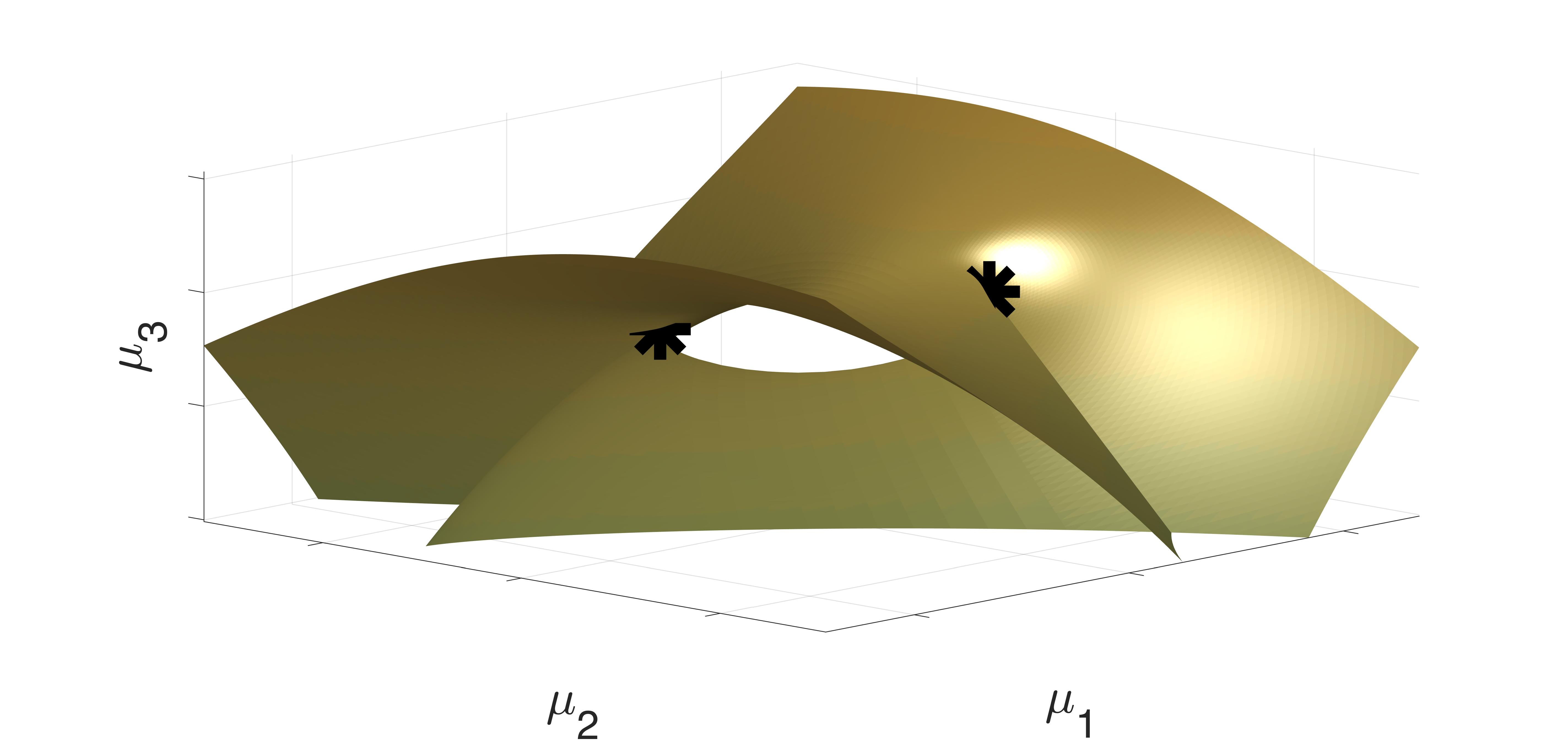}
\end{center}
\caption{The plots show those configurations of the parameters $\mu_1,\mu_2,\mu_3$ for which the problem $\nabla g_\mu(x,y)=0$ or $(\nabla g_\mu + f_\e)(x,y)=0$ becomes singular. Imagine moving around the parameter $\mu$ and watching the solutions bifurcating in the phase space. 
As $\mu$ crosses a sheet two solutions merge and vanish or are born (fold - $A_2$).
For $\mu$ in the intersection of two sheets there are two simultaneous fold singularities at different positions in the phase space. Crossing an edge three solutions merge into one (or vice versa).
Points contained in an edge correspond to cusp singularities.
At the marked point in the left plot of the unperturbed problem there is a hyperbolic umbilic singularity. Moving the parameter $\mu$ upwards along the $\mu_3$ axis through the singular point four solutions merge and vanish. In the perturbed version to the right the hyperbolic umbilic point decomposes into two swallowtail points. While the left plot {illustrates the behaviour of} a symplectic integrator {which will} correctly show a hyperbolic umbilic bifurcation $D_4^+$, the right plot {illustrates the behaviour of} a non-symplectic integrator {which will} incorrectly show two nearby swallowtail bifurcations ($A_4$).
}\label{fig:breakD4p}
\end{figure}



Figure \ref{fig:breakD4m} shows a level bifurcation set of an elliptic umbilic singularity $(D_4^-)$ and a generically perturbed version of the gradient-zero problem with a map that does not admit a primitive. Here we use the universal unfolding
\[g_\mu(x,y) =x^3-xy^2 + \mu_3 (x^2+y^2) +\mu_2 y + \mu_1 x.\]
We see that in the perturbed picture the lines of cusps fail to merge such that there is no elliptic umbilic point but only folds and cusp bifurcations.

\begin{figure}
\begin{center}
\includegraphics[width=0.49\textwidth]{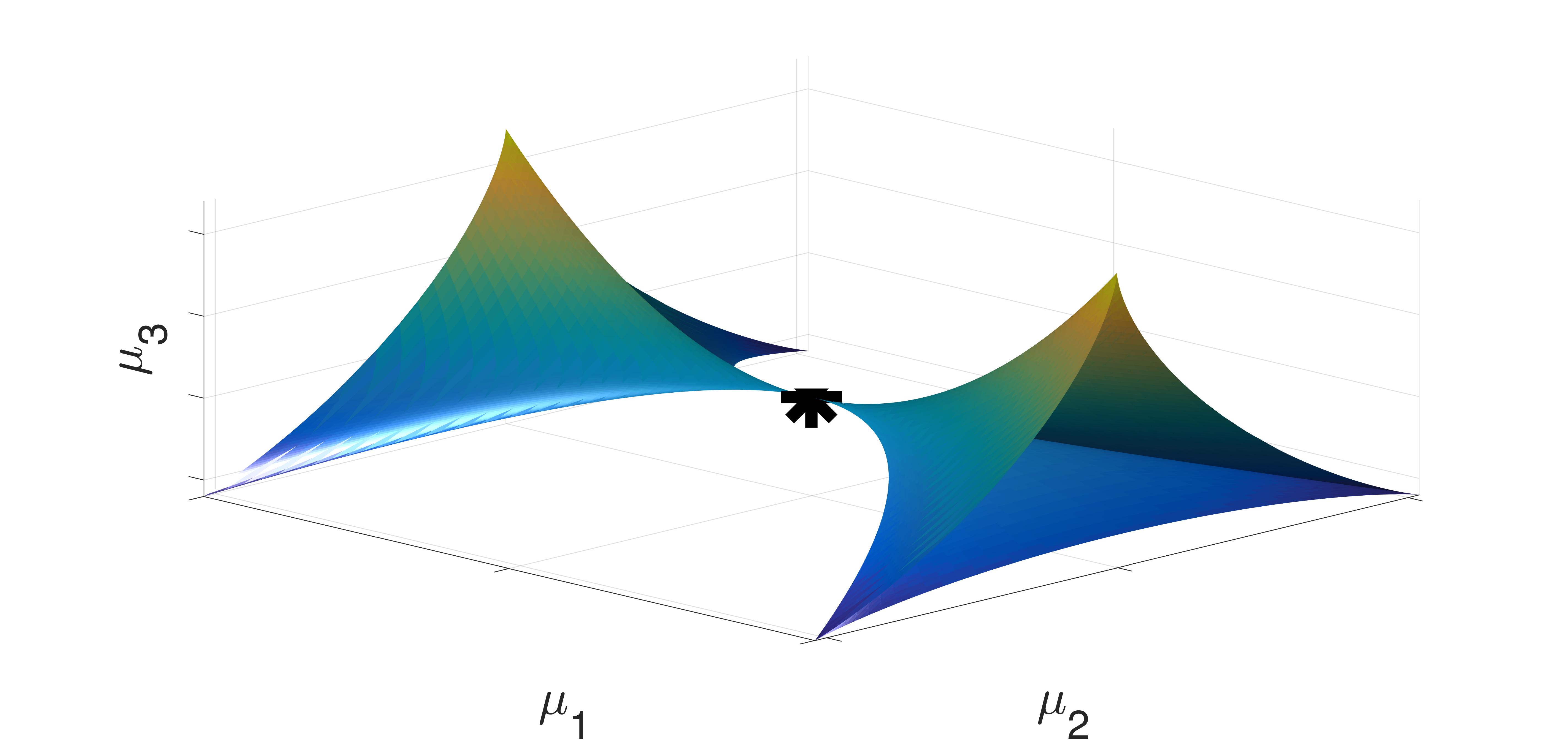}
\includegraphics[width=0.49\textwidth]{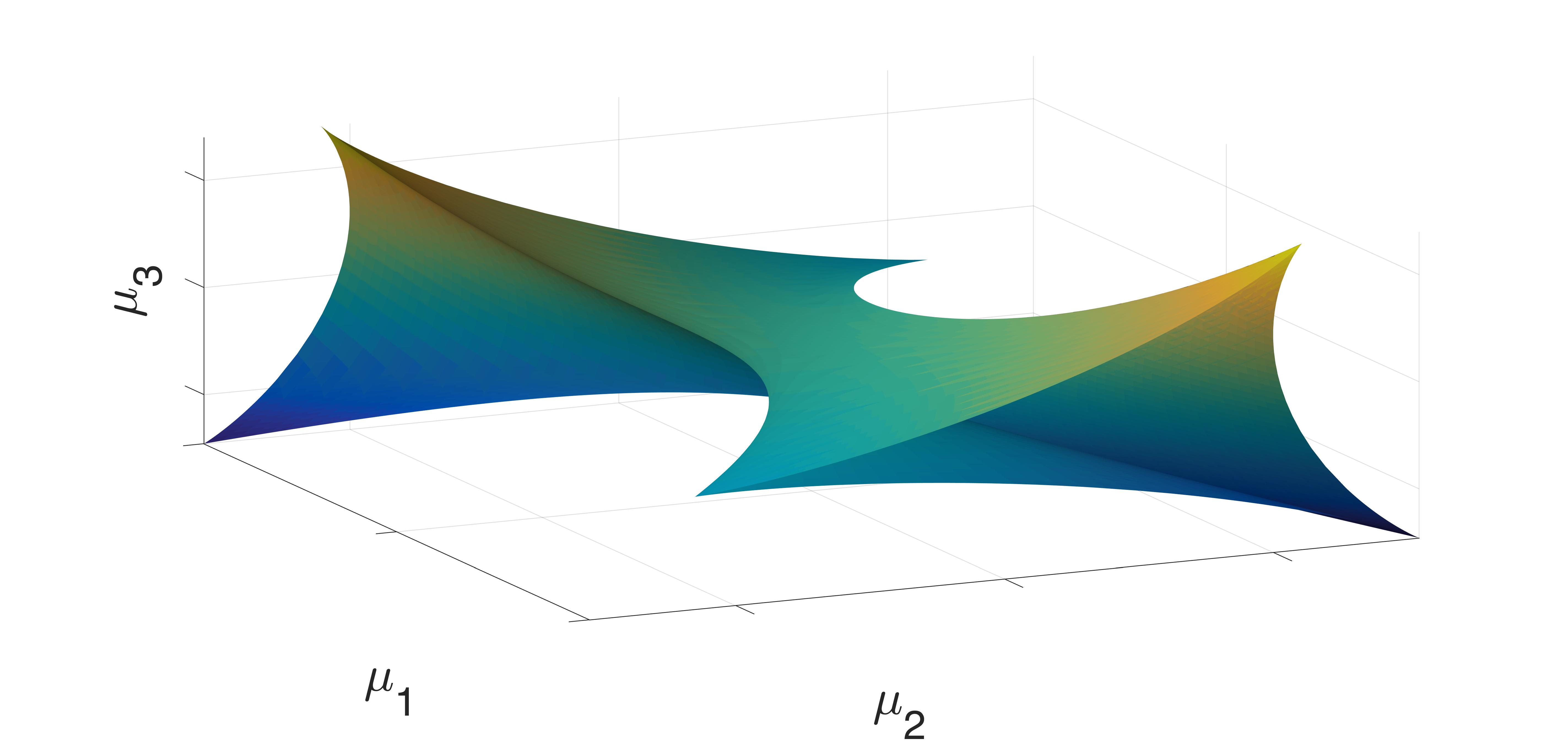}
\end{center}
\caption{
An exact (left) and a perturbed (right) version of the bifurcation level set of an elliptic umbilic singularity $D_4^-$. The elliptic umbilic point marked by an asterisk in the left plot is not present in the right plot where three lines of cusps fail to merge. The left figure {illustrates} a bifurcation diagram obtained by a symplectic integrator correctly showing a $D_4^-$ singularity, the plot to the right {illustrates} using a non-symplectic integrator incorrectly showing no elliptic umbilic point.}\label{fig:breakD4m}
\end{figure}


Indeed, the authors prove in \cite{numericalPaper} that the behaviour shown in figure \ref{fig:breakD4p} and \ref{fig:breakD4m} is universal. This means
%
in any Hamiltonian boundary value problem with a generic hyperbolic or elliptic bifurcation any symplectic integrator will show a bifurcation diagram as on the left of figures \ref{fig:breakD4p} and \ref{fig:breakD4m} while any integrator which breaks the symplectic structure of the problem will show incorrect bifurcation diagrams which qualitatively look like those on the right of figures \ref{fig:breakD4p} and \ref{fig:breakD4m}.

\subsection{Example. H{\'e}non-Heiles Hamiltonian system}
Consider the H{\'e}non-Heiles Hamiltonian on the phase space $T^\ast\R^2 \cong \R^2\times\R^2$ for the parameter value $-10$, i.e.\
\begin{equation}\label{eq:HenonHeilesHam}
H(q,p)=\frac 12 \|p\|^2+\frac 12 \|q\|^2-10\left(q_1^2 q_2-\frac{q_2^3}3 \right).
\end{equation}
In \eqref{eq:HenonHeilesHam} the norm $\| . \|$ denotes the euclidean norm on $\R^2$. We obtain a symplectic map $\phi$ by integrating Hamilton's equations
\[
\dot q = \nabla_p H(q,p),\quad \dot p = -\nabla_q H(q,p)
\]
up to time $\tau=1$ using the 2nd order symplectic St\"ormer-Verlet scheme with 10 time-steps.
Consider the following Dirichlet-type problem for the given Hamiltonian system: a point $(q,p)$ is a solution to the boundary value problem if and only if
\[
q=q^\ast \quad \text{and} \quad \phi^Q(q,p)=Q^\ast,
\]
where $\phi^Q$ denotes the $Q$-component of $\phi$. Let the boundary values $q^\ast$ and $Q^\ast$ be the parameters of the system (in contrast to the example presented in section \ref{subsec:Bratu} where the parameter was in the Hamiltonian). 
To reduce dimensionality we fix the first component of the start value, i.e.\ we set $(q^\ast)^1=0$. The level bifurcation set, i.e.\ the set of points in the parameter space at which a bifurcation occurs in a chosen subset $U$ of the phase space, is given as
\[ \{ (q^2,\phi^Q(0,q^2,p_1,p_2)) \; | \; \det D_p \phi^Q(0,q^2,p_1,p_2)=0,\, (p_1,p_2)\in U \}. \]

Figure \ref{fig:D4m} shows the level bifurcation set of the problem near an elliptic umbilic singularity $D_4^-$. Derivatives of the symplectic approximation to $\phi$ were obtained using automatic differentiation. The $D$-series bifurcation was found numerically by solving $(q^2,p_1,p_2) \mapsto D_p \phi^Q(0,q^2,p_1,p_2)=0$. It is correctly resolved. 
\begin{figure}
\begin{center}
\includegraphics[width=0.5\textwidth]{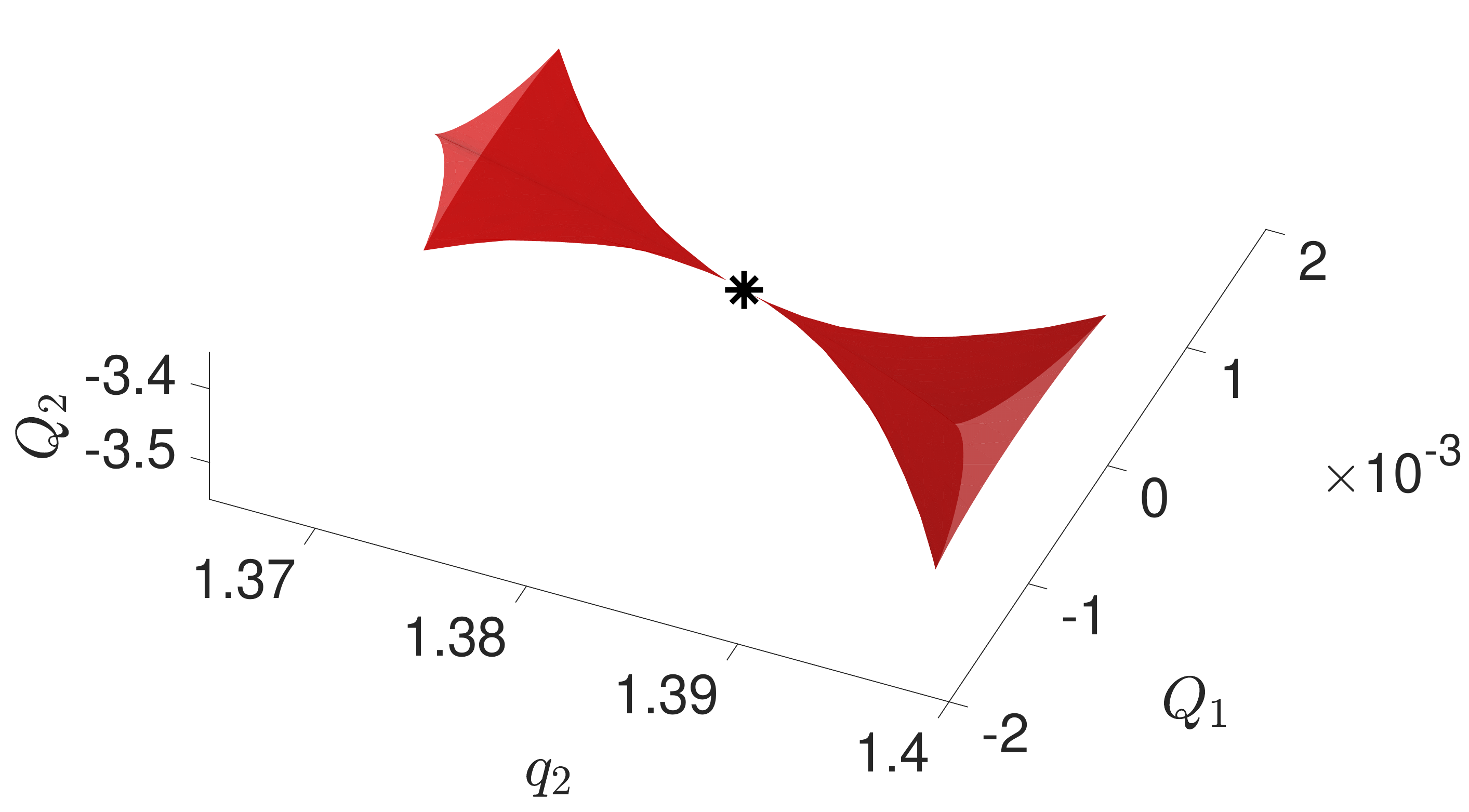}
\end{center}
\caption{Elliptic umbilic $D_4^-$ in a family of Dirichlet problems for the numerical time-1-map of the H{\'e}non-Heiles system \eqref{eq:HenonHeilesHam} where the boundary values are parameters and $(q^\ast)^1=0$ is fixed to reduce dimensionality. The numerical flow was obtained using the 2nd order symplectic St\"ormer-Verlet scheme with 10 time-steps. The asterisk denotes the calculated position of the elliptic umbilic singularity. Derivatives were obtained using automatic differentiation.}\label{fig:D4m}
\end{figure}
By the considerations of section \ref{subsec:breakbifur}, the bifurcation would break if we used a non-symplectic discretisation for the flow map of $H$ instead of the symplectic St\"ormer-Verlet scheme. See \cite[Appendix B]{numericalPaper} for a numerical experiment.

\section{Capturing periodic pitchfork bifurcations in integrable systems}\label{sec:capturePitchfork}

The minimal amount of parameters in a family of problems such that a singularity is generic, i.e.\ unremovable under small perturbations, depends on the class of systems considered. For example, we have shown that a $D^{\pm}_4$ singularity occurs generically in Hamiltonian boundary value problems with 3 parameters.
In a boundary value problem for a flow map without any extra (e.g.\ symplectic) structure a $D^{\pm}_4$ singularity needs 4 parameters to become generic.
Restricting the class of systems further, e.g.\ to those with certain symmetries and/or integrals of motion, the count of required parameters can change. Here we consider a special singularity which occurs generically in 1-parameter families of symmetrically separated Lagrangian boundary value problems for completely integrable Hamiltonian systems, e.g.\ planar, autonomous systems.

\subsection{Introduction and the effects of discretisation}
Consider a Lagrangian submanifold $\Lambda$ in the phase space of a Hamiltonian system. The manifold $\Lambda$ defines a symmetrically separated Lagrangian boundary value problem: a motion is a solution if and only if it starts and ends after a fixed time on $\Lambda$. Homogeneous Dirichlet boundary conditions as in figure \ref{fig:BratuPhase} are instances of such a boundary condition. As the authors prove in \cite[Thm. 3.2]{bifurHampaper}, a periodic pitchfork bifurcation (see the second plot in figure \ref{fig:PPcyclic1415newcoords}) is a generic phenomenon in 1-parameter families of boundary value problems in \textit{completely integrable}\footnote{If the phase space dimension is $2n$ then there are $n$ functionally independent integrals of motion.} Hamiltonian systems with symmetrically separated Lagrangian boundary conditions. 

In \cite{bifurHampaper} the authors reveal how the completely integrable structure and the structure of the boundary conditions induce a $\Z/2\Z$-symmetry in the generating function of the problem family. The singular point of a pitchfork bifurcation is unfolded under the presence of a $\Z/2\Z$-symmetry to a pitchfork bifurcation. The corresponding critical-points-of-a-function problem is defined by the family $(x^4+\mu_2 x^2)_{\mu_2}$. Unfolding without the $\Z/2\Z$-symmetry leads, however, to the normal form of a cusp bifurcation which is defined by the family  $(x^4+\mu_2 x^2+\mu_1x)_{\mu_1,\mu_2}$. The effect of the symmetry breaking parameter $\mu_1$ is illustrated in figure \ref{fig:pitchforkcusp}: we see how the pitchfork bifurcation, which is present for $\mu_1=0$, breaks if $\mu_1 \not=0$.

Consider a completely integrable Hamiltonian boundary value problem. Approximating Hamilton's equations introduces the discretisation parameter $h$ as an additional parameter. The discretisation does not respect the completely integrable structure. 
\begin{figure}
\begin{center}
\includegraphics[width=0.4\textwidth]{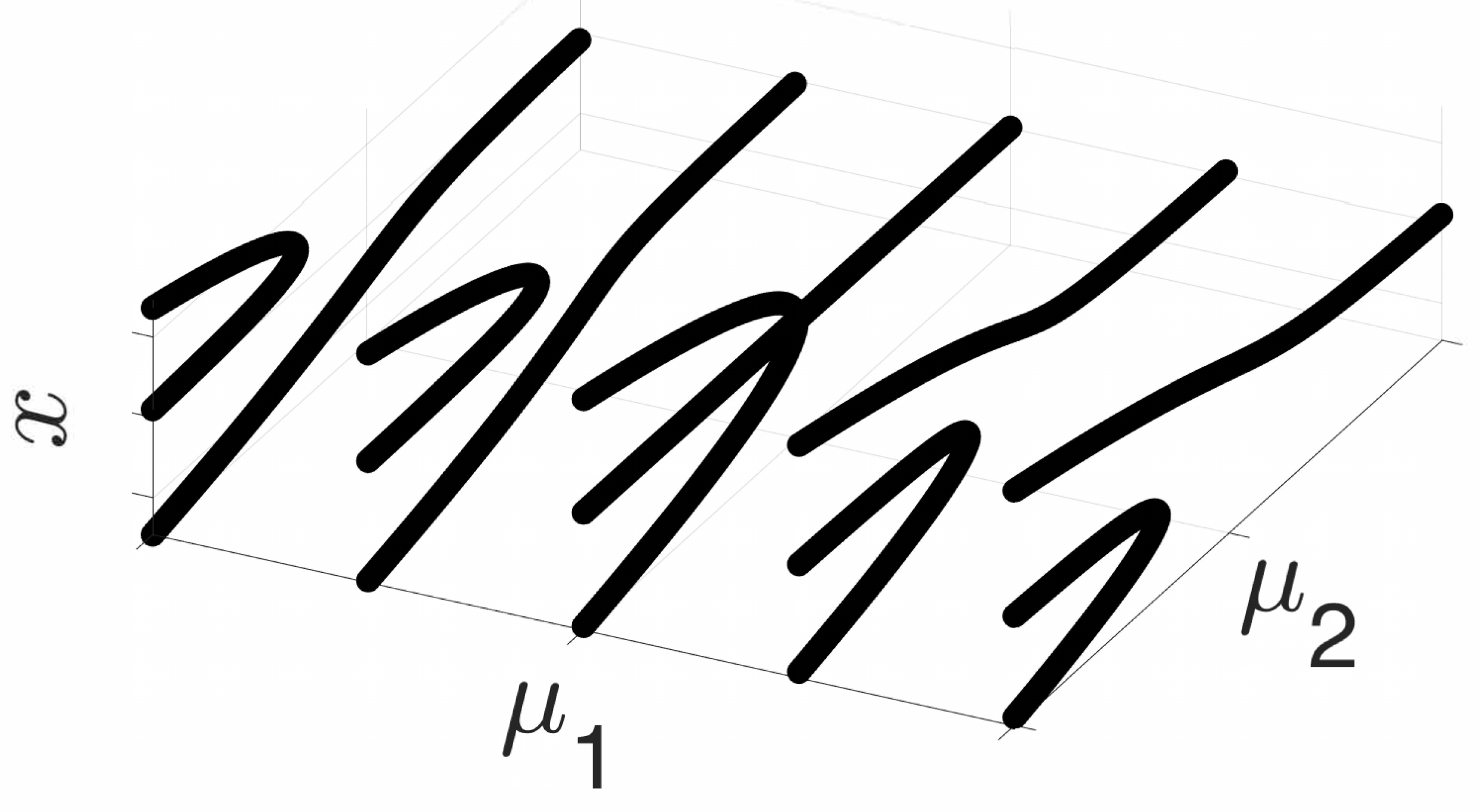}
\end{center}
\caption{The figure shows the critical point set of the model cusp $x^4+\mu_2 x^2+ \mu_1 x$ over the $\mu_1$/$\mu_2$-parameter space for selected values of $\mu_1$. For $\mu_1=0$ we see a pitchfork bifurcation.}\label{fig:pitchforkcusp}
\end{figure}
If the order of accuracy of the integrator is $k$ then, generically, the power of the step-size $h^k$ acts like the unfolding parameter $\mu_1$ in figure \ref{fig:pitchforkcusp}. We say the pitchfork is \textit{broken up to the order of accuracy of the integrator}.
This means in a generic setting symplecticity of an integrator cannot be expected to improve the numerical capturing of periodic pitchfork bifurcations because the periodic pitchfork bifurcation is related to the integrable structure rather than to symplecticity. 
However, in many important cases, symplecticity does help because symplectic integrators preserve a modified Hamiltonian exponentially well \cite[IX]{GeomIntegration} and are, therefore, guaranteed to capture at least this part of the integrable structure very well. In the planar case this means the whole integrable structure is captured exponentially well by symplectic integrators. Here, the discretisation parameter does not enter generically but unfolds the pitchfork bifurcation to a family of nearly perfect pitchforks. These pitchforks are broken only up to exponential order in $-h^{-1}$. The same is true in higher dimensional examples if additional integrals/symmetries are captured because they are, e.g.\ affine linear.
%
%
%
To which extent the completely integrable structure of a system is present in the numerical flow determines how well a pitchfork bifurcation is captured. 


\subsection{Numerical examples}

We analyse the generic 1-parameter family of planar Hamiltonian boundary value problems
\begin{equation}\label{eq:numEx}
H_\mu(q,p) = p^2 + 0.1p^3 - 0.01 \cos(p)  + q^3-0.01q^2+\mu q, \quad q(0)=0.2 = q(1.7). \end{equation}

Figure \ref{fig:pitchfork2d} shows how a pitchfork bifurcation of \ref{eq:numEx} is captured by the symplectic St\"ormer-Verlet  method with 14, 21 and 28 steps. The breaking in the bifurcation for 28 steps is visible in a close-up of the bifurcation diagram. Notice the different scaling of the axes in the plots. We see that only few time-steps are needed to capture the bifurcation very well. The strong improvement of the shape of the pitchfork as the the amount of steps is increased indicates a convergence to the correct shape which is better than polynomial.

\begin{figure}
\begin{center}
\includegraphics[width=0.32\textwidth]{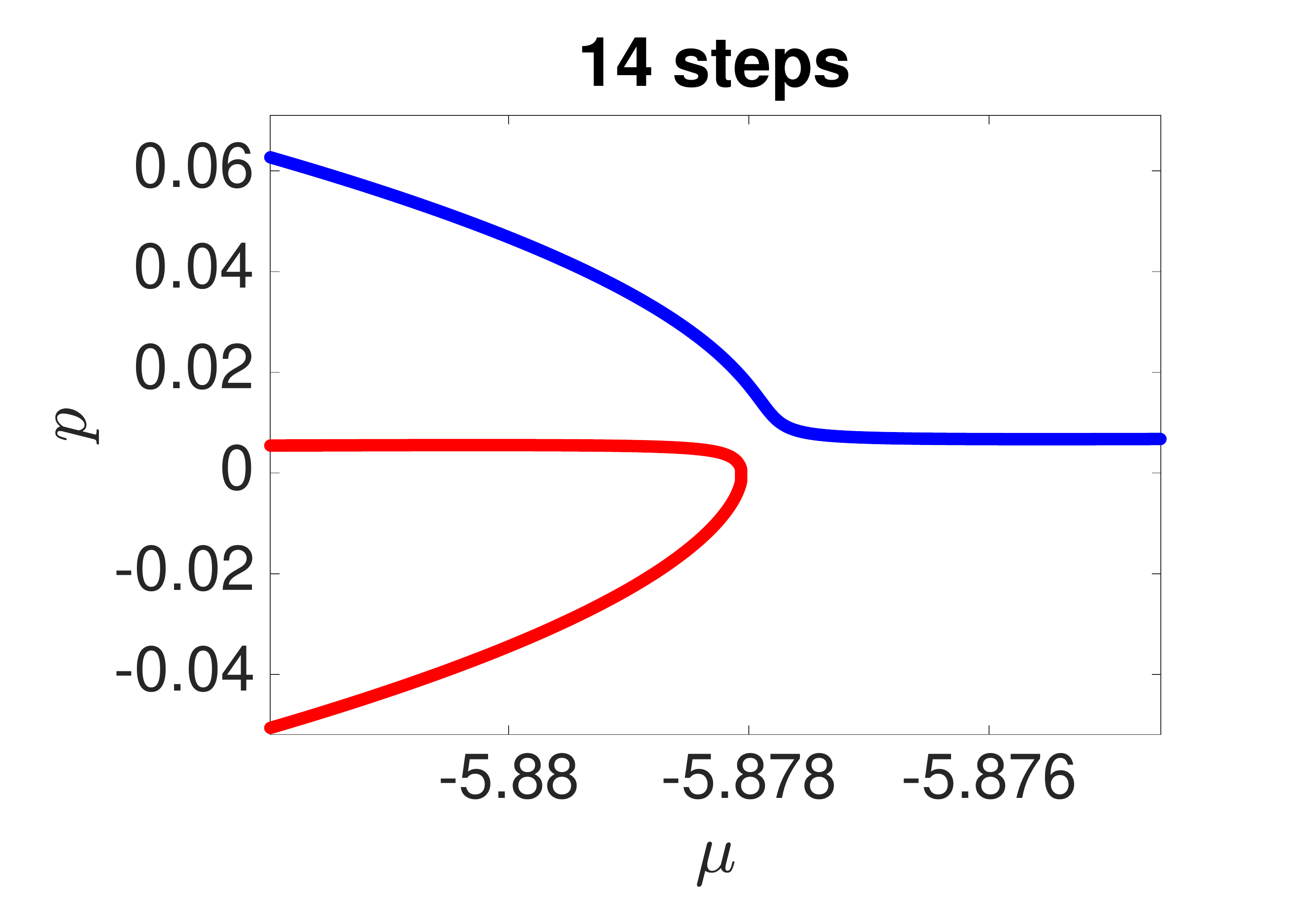}
\includegraphics[width=0.32\textwidth]{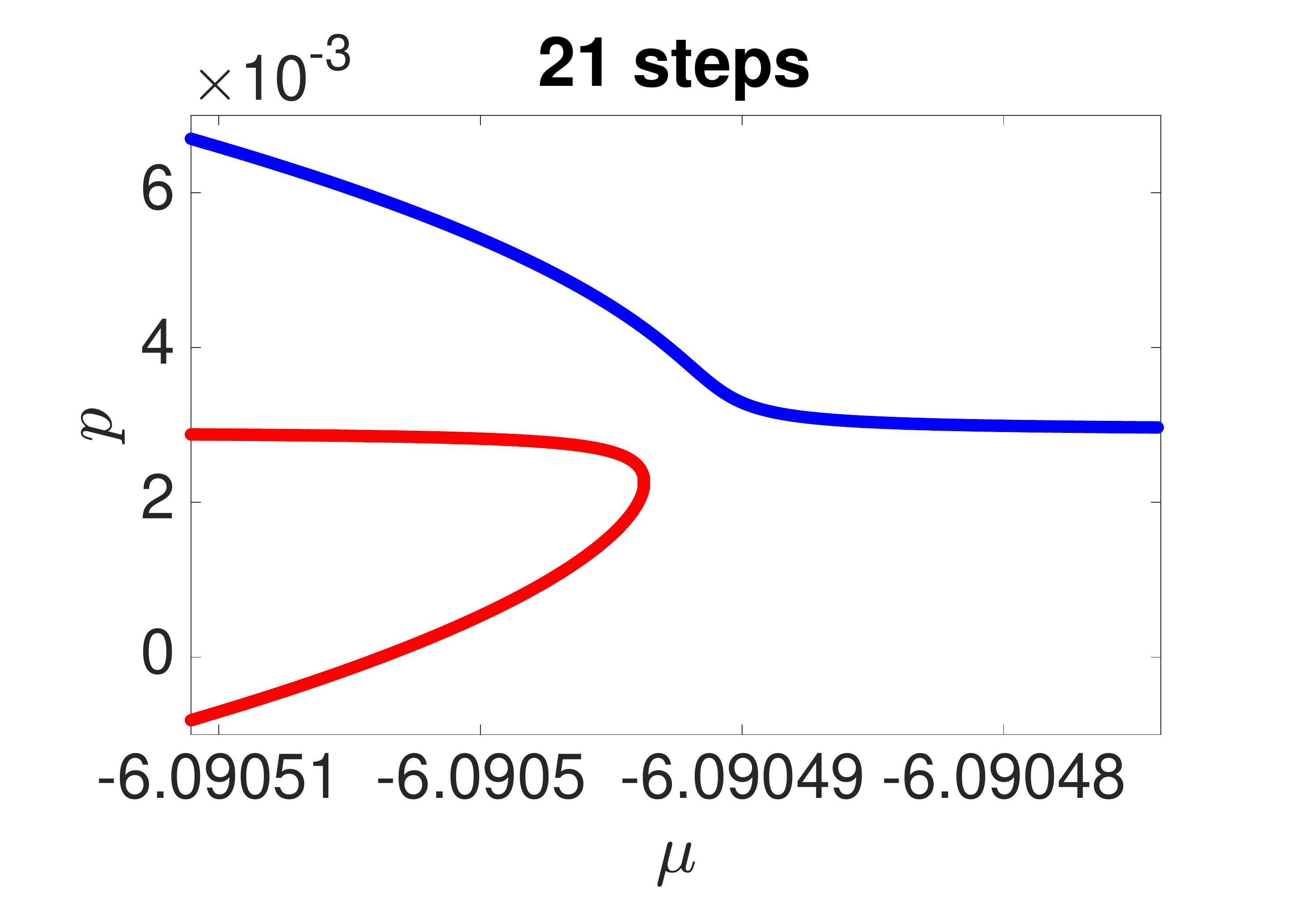}
\includegraphics[width=0.32\textwidth]{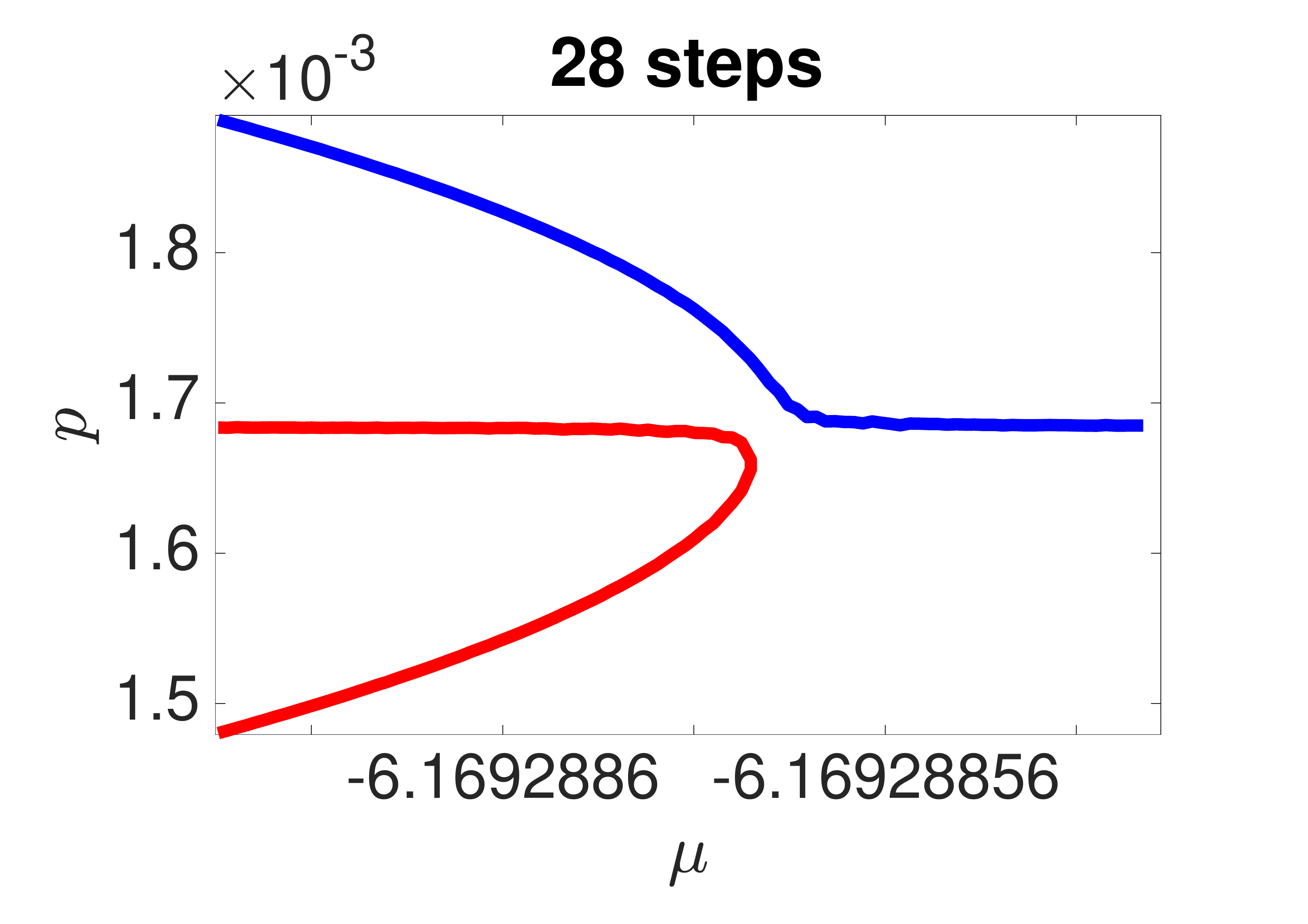}
\end{center}
\caption{Bifurcation diagrams for \eqref{eq:numEx} solved with the symplectic St\"ormer-Verlet method using 14, 21 and 28 steps. Notice different scaling of the axes. {The shape of the pitchfork bifurcation is captured exponentially well.}}\label{fig:pitchfork2d}
\end{figure}

Let us compare the observations from figure \ref{fig:pitchfork2d} with a non-symplectic integrator of the same order of accuracy. To understand what is happening in the latter case we compute a larger part of the bifurcation diagram using the non-symplectic, 2nd order Runge-Kutta method (RK2). The upper and middle branch of the pitchfork bifurcation do not exist in the numerical bifurcation diagram until we calculate with more than 25 steps (figure \ref{fig:pitchforkRK}). With 100 steps the bifurcation is recognisable and with 400 steps its break can only be seen in a close-up plot and the quality of the capture is comparable with the 14-steps St\"ormer-Verlet integration from figure \ref{fig:pitchfork2d}.
As the computational costs per step for both methods do not differ significantly on separable Hamiltonian systems we conclude that the symplectic St\"ormer-Verlet method (figure \ref{fig:pitchfork2d}) performs remarkably better then the non-symplectic method RK2 (figure \ref{fig:pitchforkRK}).

\begin{figure}
\begin{center}
\includegraphics[width=0.325\textwidth]{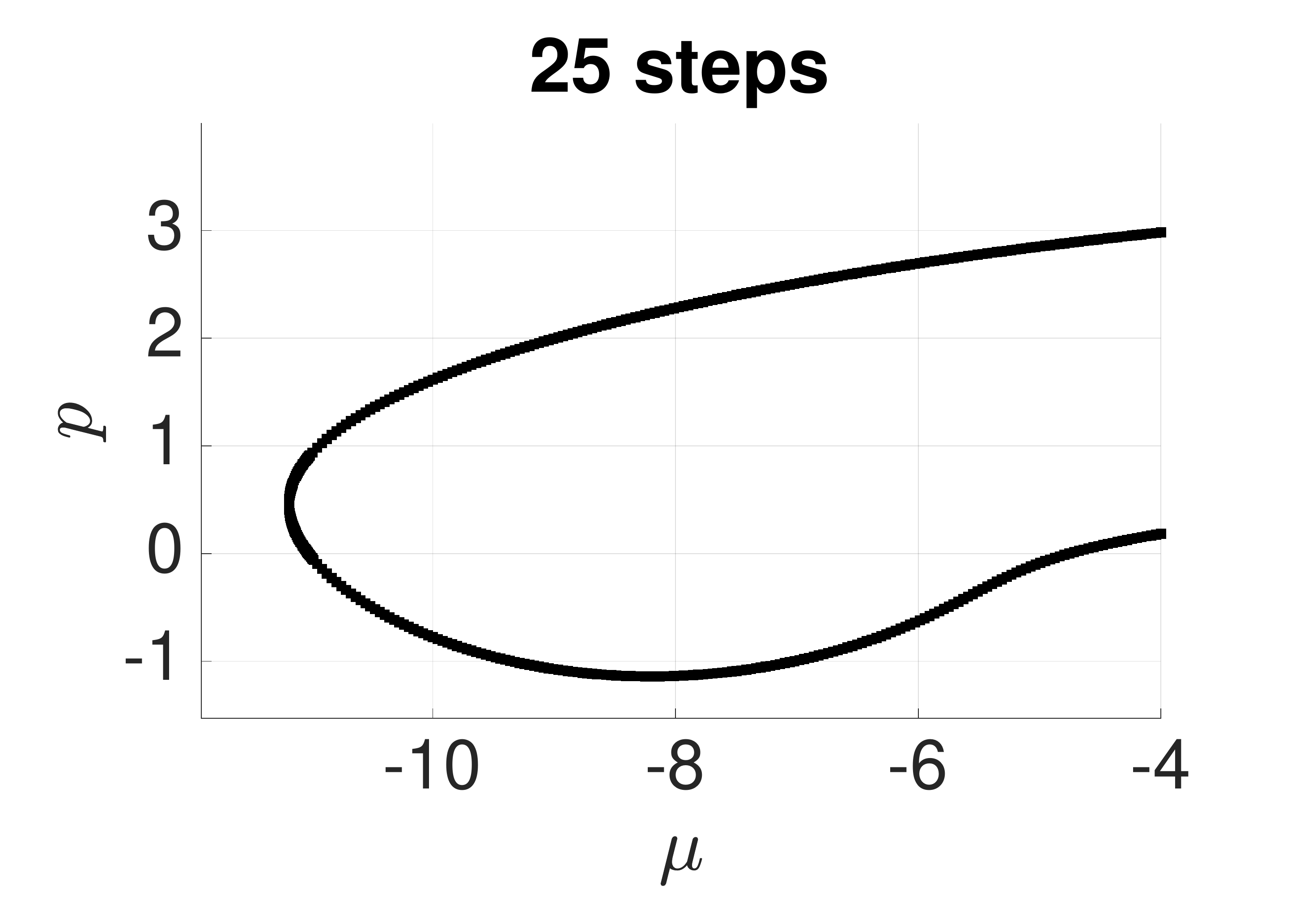}
\includegraphics[width=0.325\textwidth]{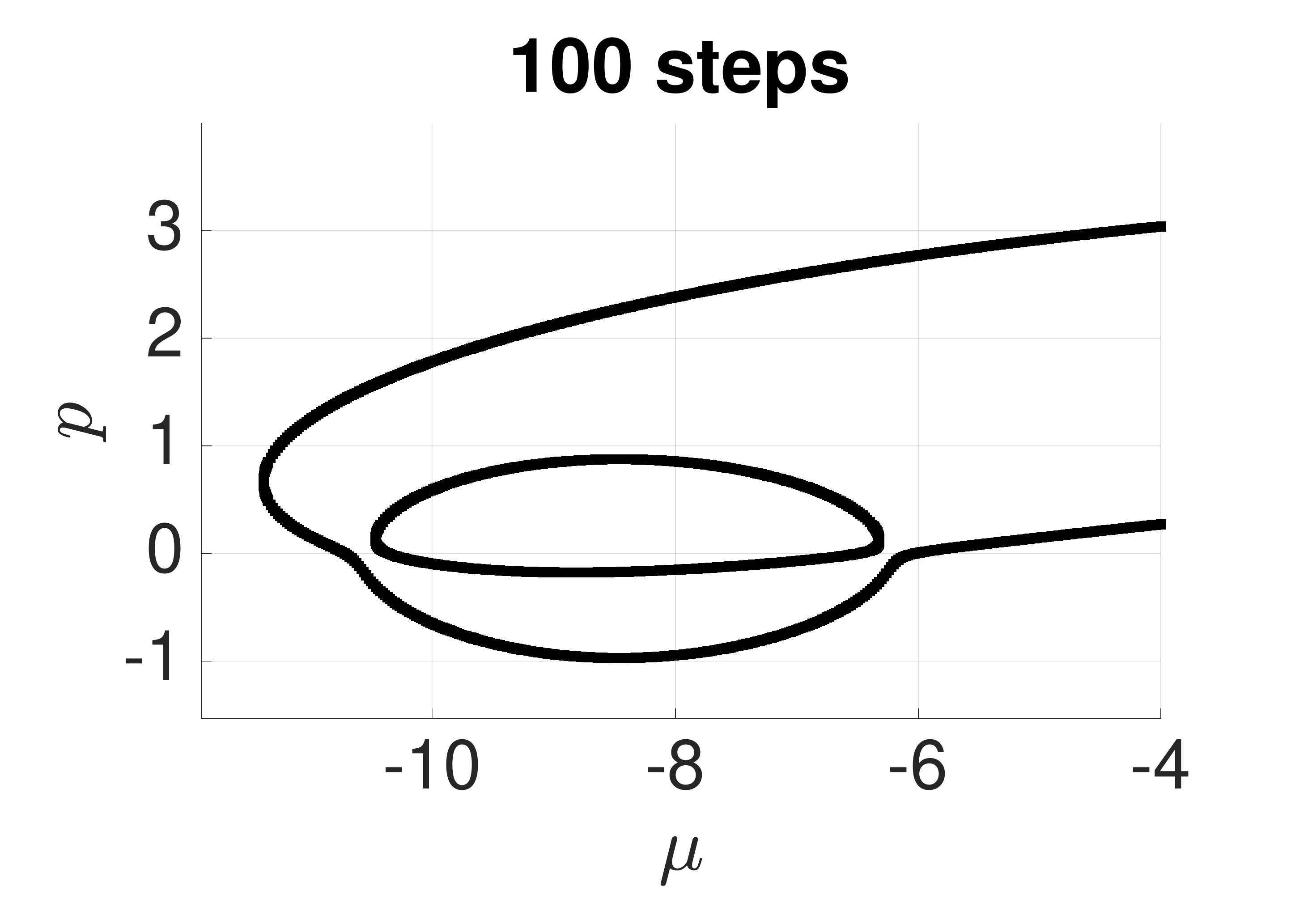}
\includegraphics[width=0.325\textwidth]{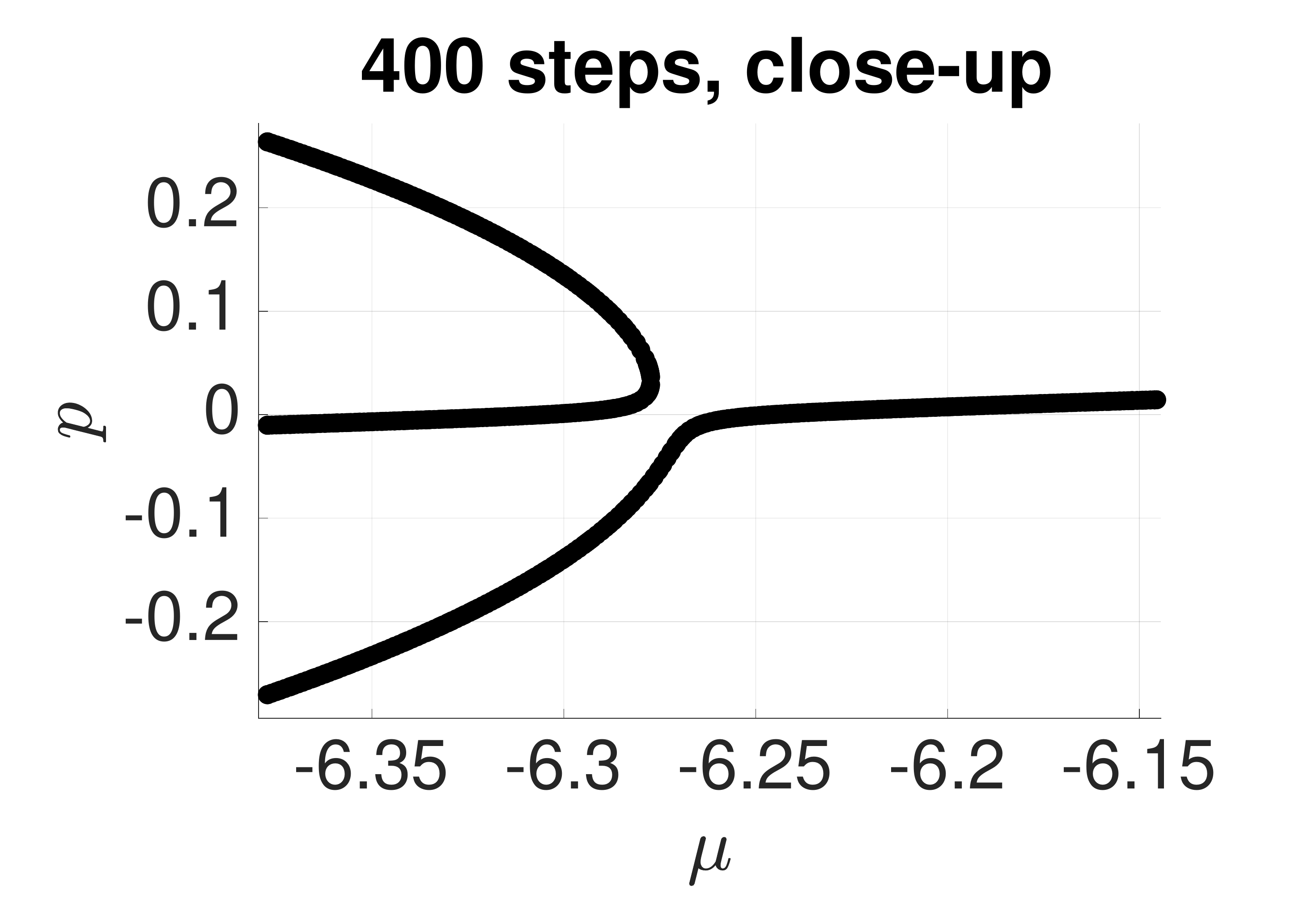}
\end{center}
\caption{Bifurcation diagrams for \eqref{eq:numEx} solved with RK2 using different number of time-steps.}\label{fig:pitchforkRK}
\end{figure}

{Figure \ref{fig:Lobatto} shows how the 4th order accurate, time-reversal symmetric, 3-stage Lobatto IIIA method captures the pitchfork bifurcation. The scheme is not symplectic. 
The symmetry properties of the method do not play a role as the considered Hamiltonian system is not time-reversal symmetric.
Comparing figure \ref{fig:pitchfork2d} with \ref{fig:Lobatto} we see that the St\"ormer-Verlet scheme beats the Lobatto IIIA method in terms of how well it preserves the shape of the periodic pitchfork bifurcation although it is of lower order.}

{Figure \ref{fig:MATLABbvp} shows the bifurcation diagram if the Hamiltonian boundary value problems are solved with MATLAB's build-in codes bvp4c and bvp5c (MATLAB R2016b). These are multi-purpose codes which are designed to solve general two-point boundary value problems for ODEs. 
The code bvp4c is based on the 4th-order, 3-stage Lobatto IIIA method while bvp5c uses the 4-stage Lobatto IIIA formula. Both codes re-mesh the time-grid if the solution does not meet tolerance criteria. The methods require an initial guess for a solution of the boundary value problem defined on a user supplied initial mesh \cite{KierzenkaBvP4c}.}
{
In our experiment we do the following (primitive) continuation method: We use initial guesses from the Lobatto IIIA experiment (figure \ref{fig:Lobatto}) at $\mu=-6.5$, let  bvp4c or bvp5c solve the boundary value problem and use the solution as a new initial guess for the boundary value problem at the next $\mu$ -value. The process is repeated for each branch. We leave the error tolerances at their default values. Allowing the methods to use up to $10^5$ mesh points in time, the codes run without issuing warnings. As $\mu$ varies, the codes adapt the time-meshes and we do not obtain consistent bifurcation diagrams. This is because the resulting diagram shows for each $\mu$ a snapshot of a bifurcation diagram of a different parameter-family of numerical flows. This illustrates that a $\mu$-dependent re-meshing strategy for the time-grid destroys the bifurcation diagram. 
}

{
Comparing figure \ref{fig:pitchfork2d} and \ref{fig:nonuniformmesh} shows that using a fixed, non-uniform mesh destroys the excellent behaviour of symplectic methods for capturing the periodic pitchfork bifurcation. This is in contrast to the bifurcations analysed in section \ref{sec:brokenBifur}, where using a non-uniform mesh does not change the behaviour of the integrator qualitatively.
The mesh used in the numerical example is the image of a uniform grid on the interval $[0,1]$ under the map $t \mapsto \tau \frac{\exp(5t)\sin(2.6 t)}{\exp(5)\sin(2.6)}$ with 226 and 905 grid points. Here, all step-sizes are smaller than in a uniform grid with 100 or 400 grid points, respectively. With a fixed, non-uniform mesh the St\"ormer-Verlet scheme still generates a symplectic flow map. However, it loses its energy conservation properties and behaves similar to RK2 when computing the bifurcation diagram of a periodic pitchfork (compare to figure \ref{fig:pitchforkRK}).
}

\begin{figure}
\begin{center}
\includegraphics[width=0.325\textwidth]{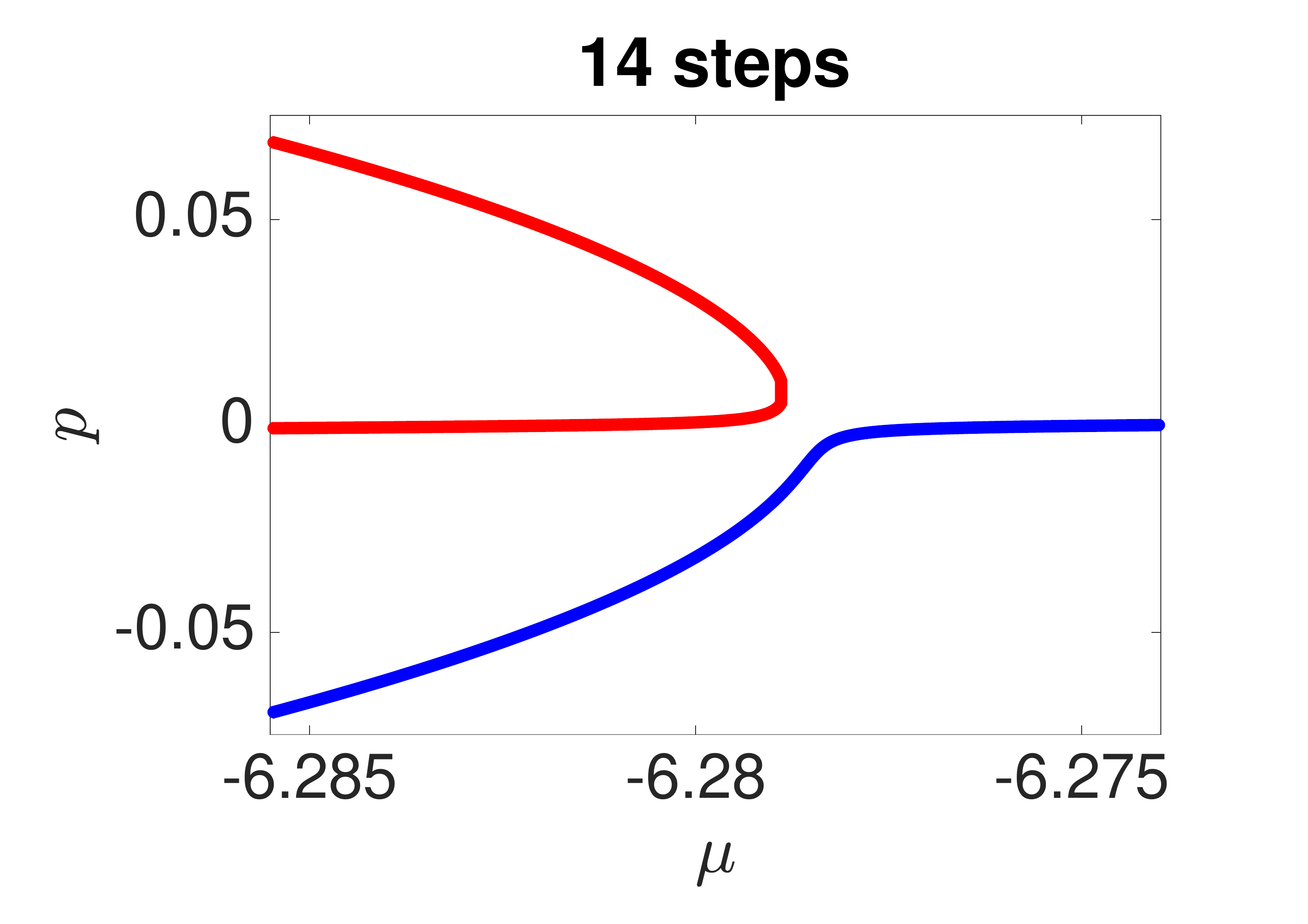}
\includegraphics[width=0.325\textwidth]{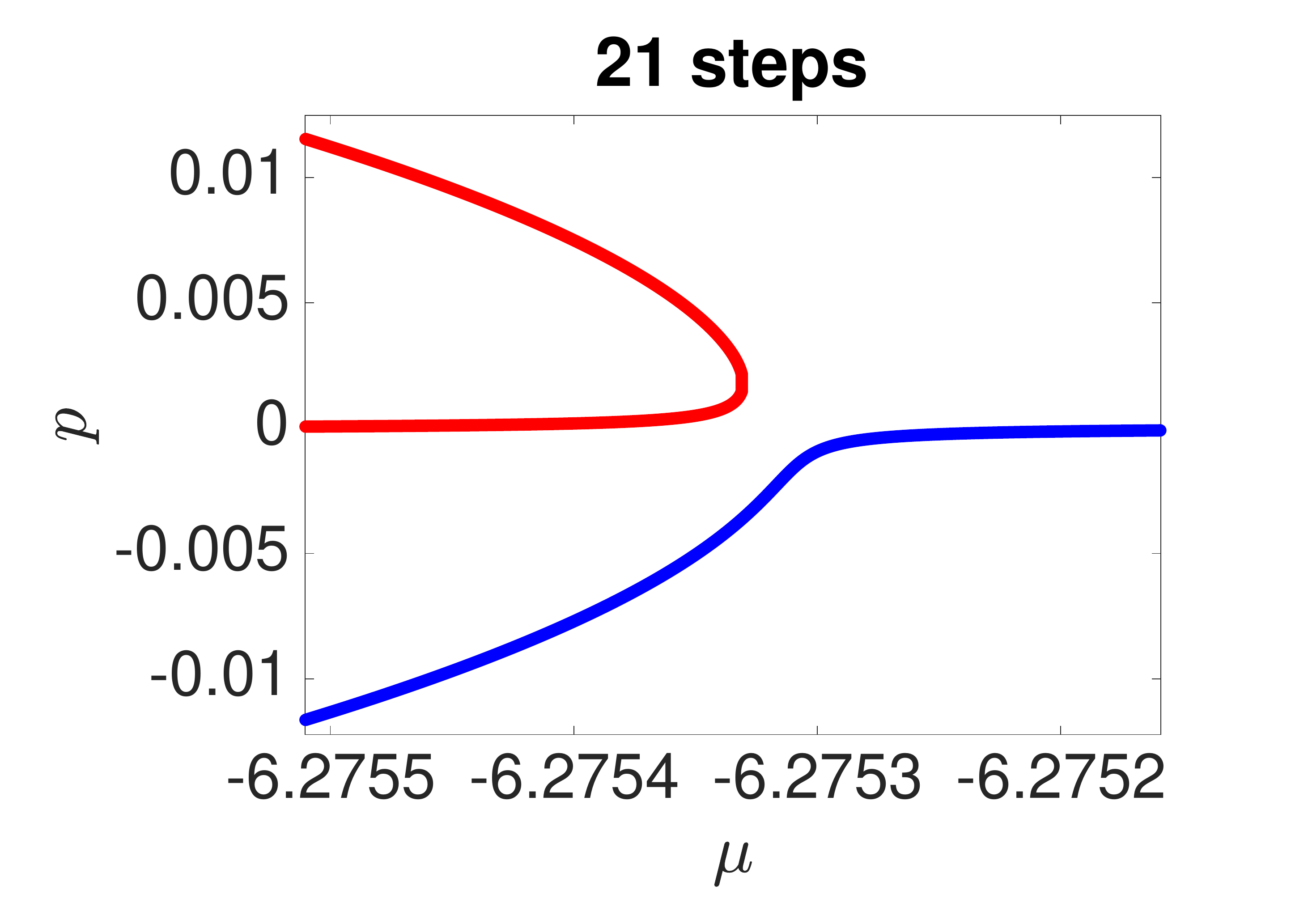}
\includegraphics[width=0.325\textwidth]{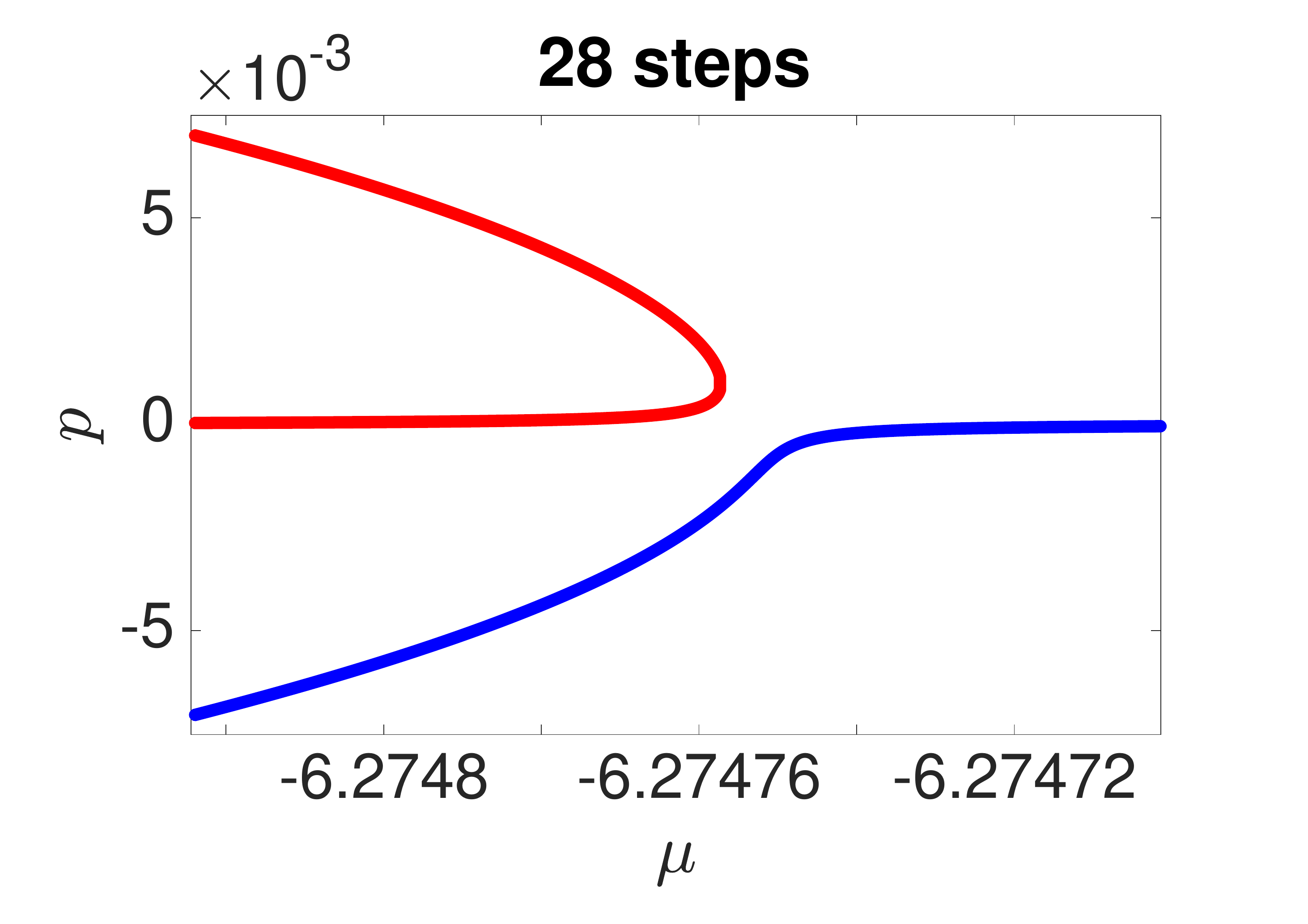}
\end{center}
\caption{
{
Bifurcation diagrams for \eqref{eq:numEx} solved with the 4th order Lobatto IIIA method and different number of time steps. The implicit equations arising in the method were solved up to round-off errors using Newton iterations.}}\label{fig:Lobatto}
\end{figure}

\begin{figure}
\begin{center}
\includegraphics[width=0.325\textwidth]{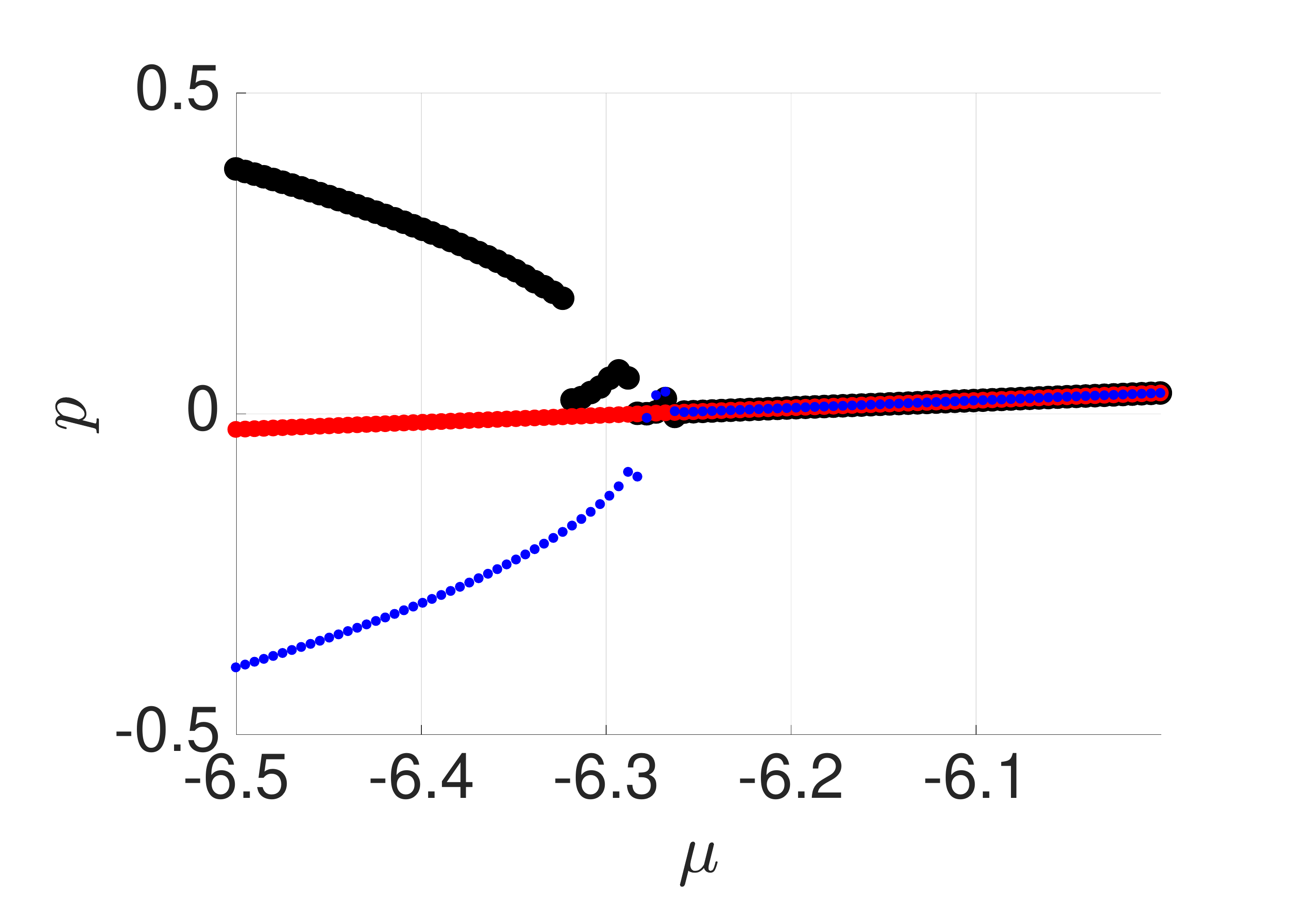}
\includegraphics[width=0.325\textwidth]{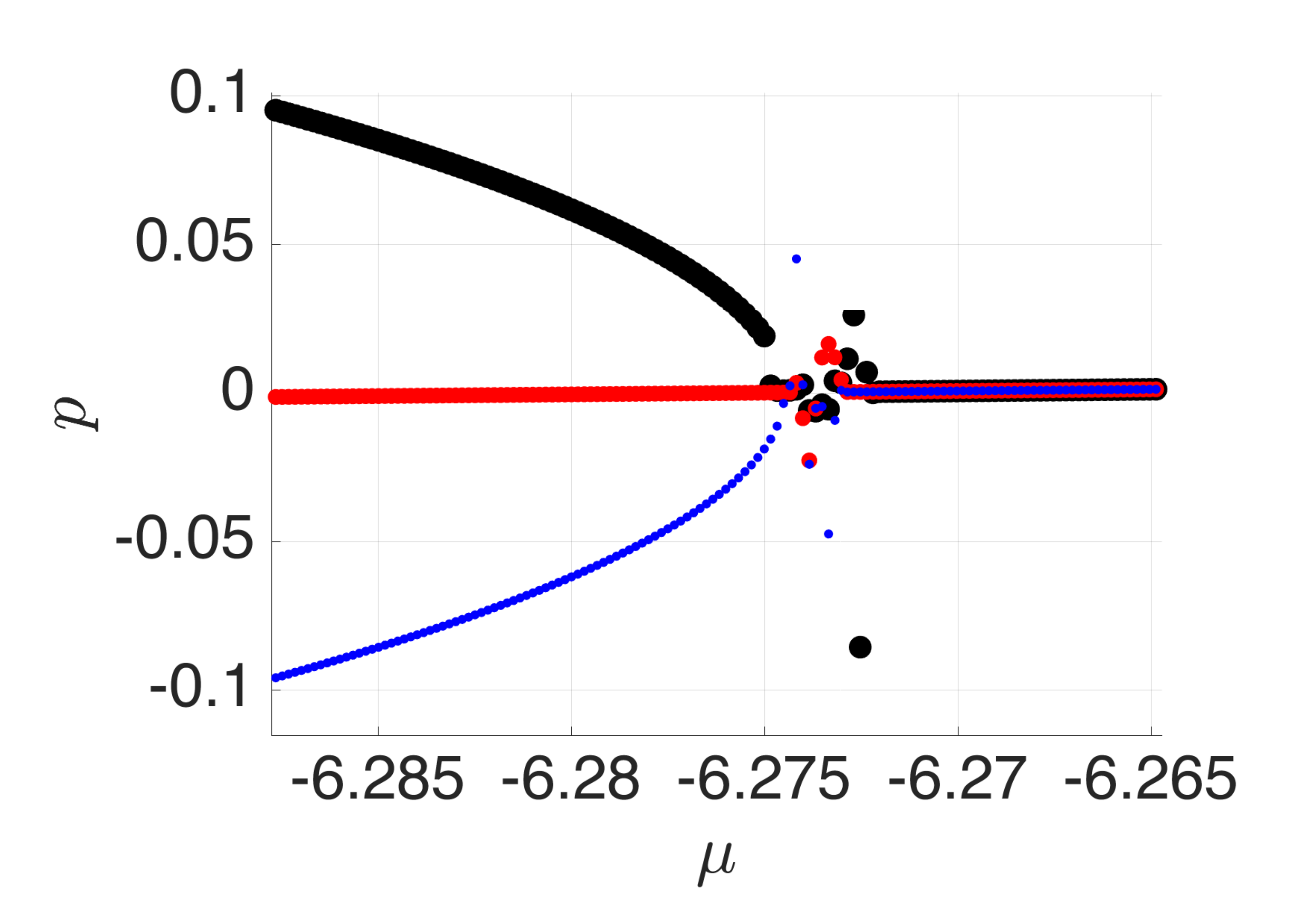}
\end{center}
\caption{
{Bifurcation diagrams for \eqref{eq:numEx} solved with MATLAB's bvp4c (left) and bvp5c (right), which use a re-meshing strategy destroying the bifurcation diagram.}}\label{fig:MATLABbvp}
\end{figure}

\begin{figure}
\begin{center}
\includegraphics[width=0.3\textwidth]{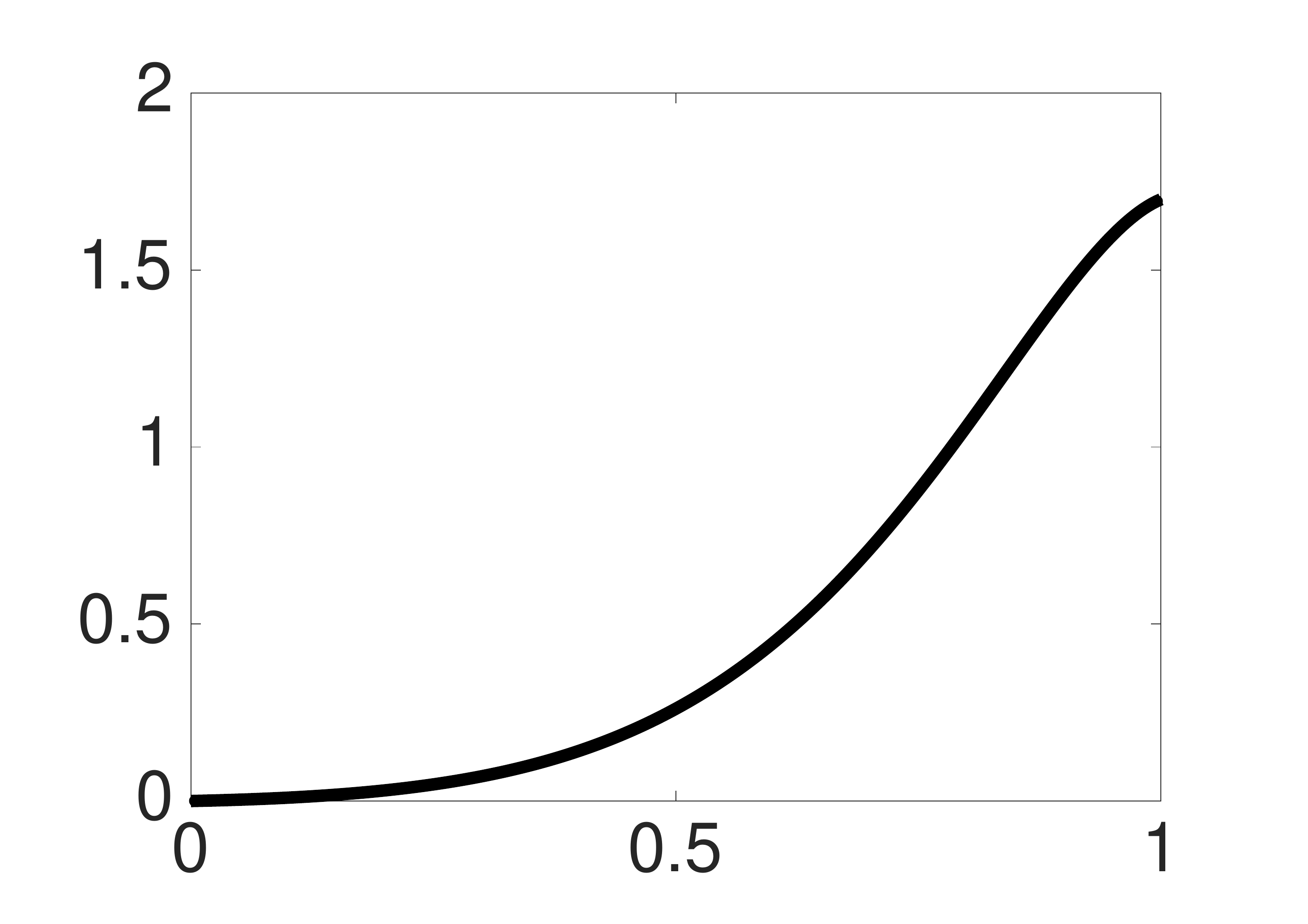}
\includegraphics[width=0.325\textwidth]{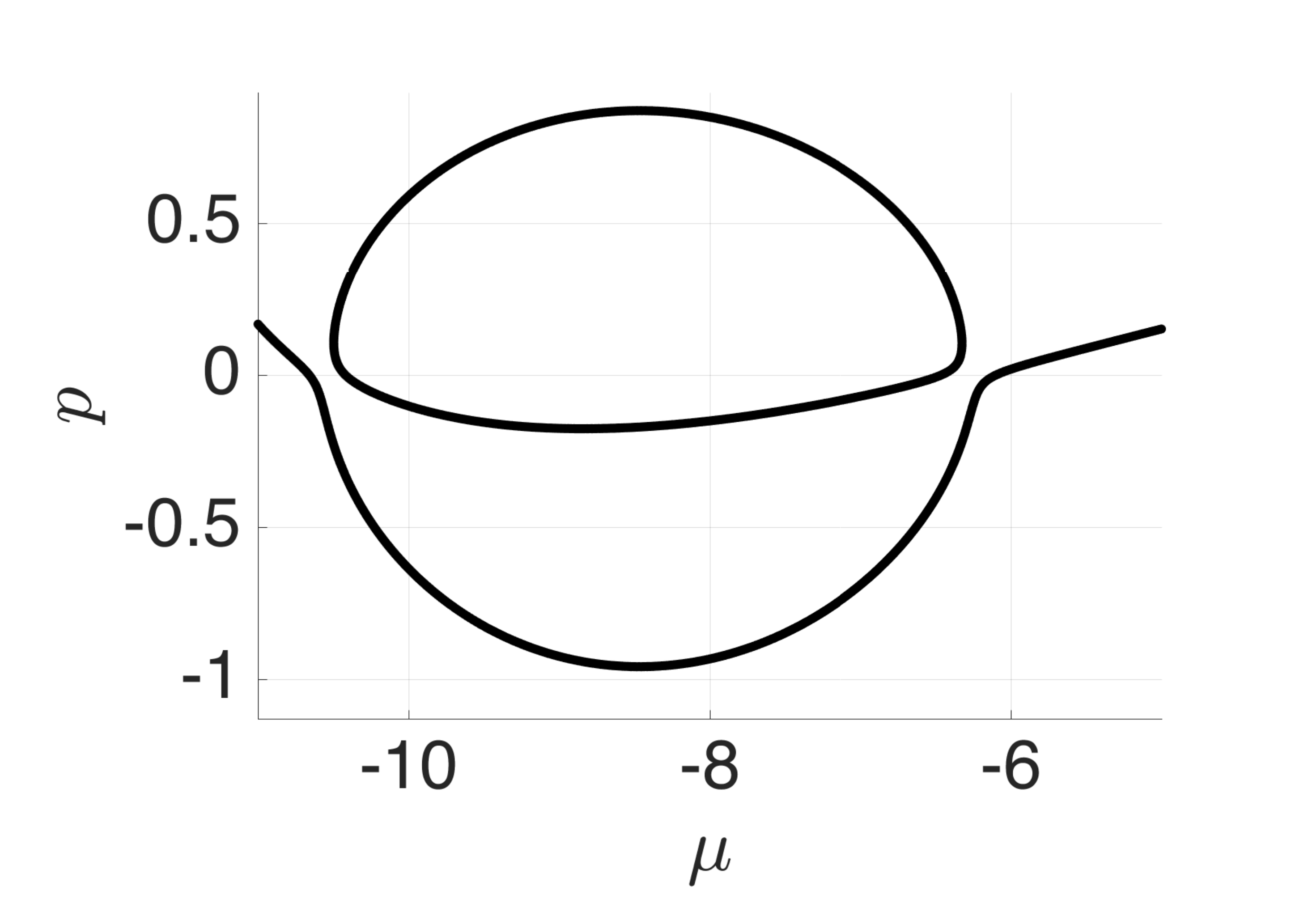}
\includegraphics[width=0.325\textwidth]{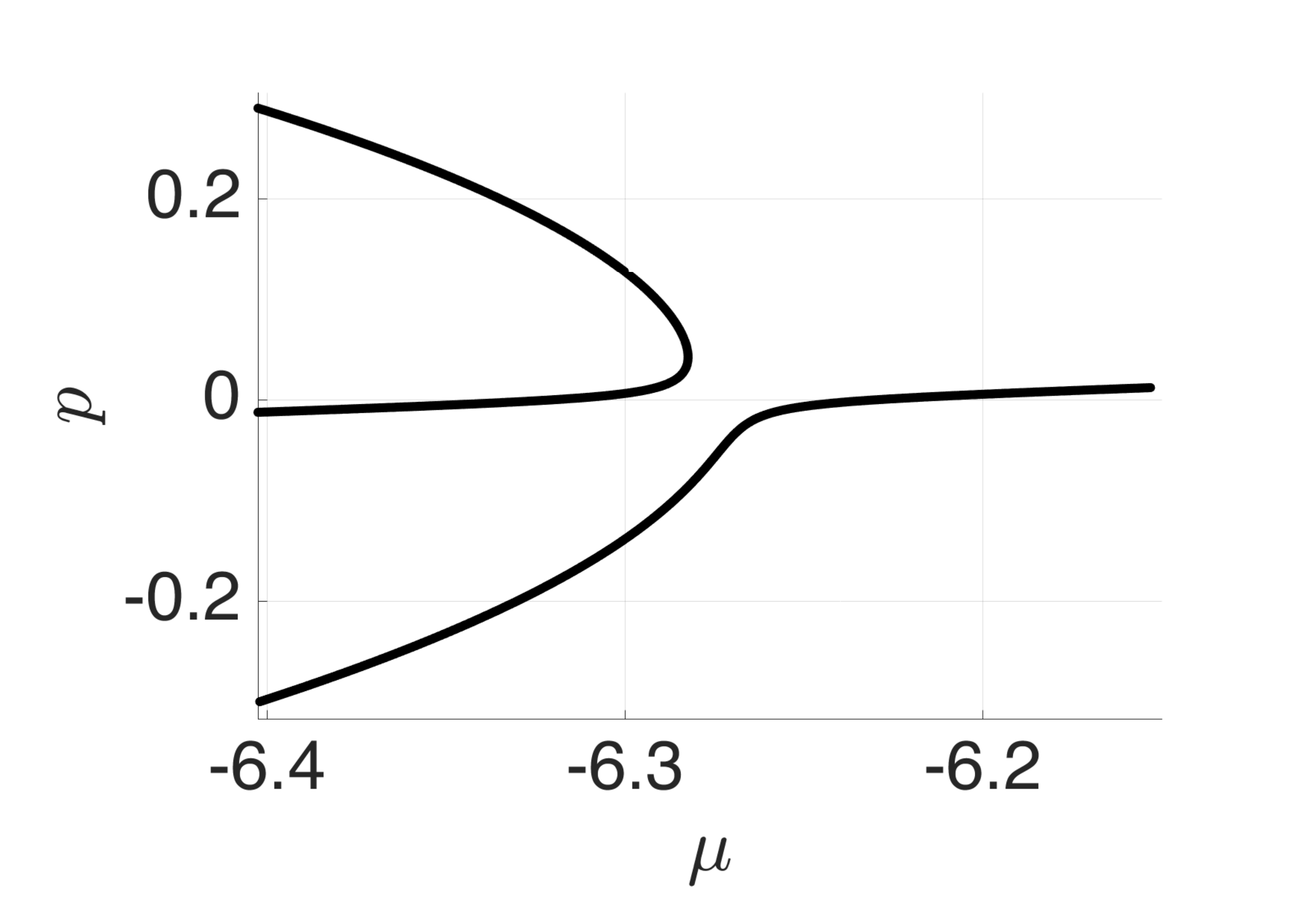}
\end{center}
\caption{
{Bifurcation diagrams for \eqref{eq:numEx} solved with the St\"ormer-Verlet method using a non-uniform mesh for the time-integration. The mesh is obtained by mapping a uniform mesh on the interval $[0,1]$ with the function whose graph is plotted to the left. The diagram in the centre corresponds to 226 mesh-points (finer than a uniform mesh with 100 points) and the diagram to the right corresponds to 905 mesh points (finer than a uniform mesh with 400 points). Here the St\"ormer-Verlet method behaves similar to RK2 (see figure \ref{fig:pitchforkRK}) and much worse than with a uniform grid (figure \ref{fig:pitchfork2d}). Also see remark \ref{rem:nonuniformmesh}.}
}\label{fig:nonuniformmesh}
\end{figure}


The St\"ormer-Verlet method preserves linear invariants \cite[Thm. IV 1.5]{GeomIntegration} and quadratic invariants of the form $Q(q,p)=q^t A p$ for a fixed matrix $A$ \cite[Thm. IV 2.3]{GeomIntegration}. Figure \ref{fig:PPcyclic1415newcoords} shows convergence of the numerical solution obtained with the St\"ormer-Verlet method to a pitchfork bifurcation in a 4-dimensional Hamiltonian system with a linear symmetry. The performance is much better than expected from the accuracy of the scheme. This can be compared to figure \ref{fig:captureintegrablepitchfork} showing a pitchfork bifurcation in a 4-dimensional Hamiltonian system with a non-affine linear symmetry. Here the periodic pitchfork bifurcation is captured only as well as expected from the accuracy of the integrator.

To which extent the completely integrable structure of a system is present in the numerical flow determines how well a pitchfork bifurcation is captured. Symplectic schemes have the advantage over non-symplectic integrators that they preserve a modified Hamiltonian exponentially well.
If, additionally, the other integrals of motions are also captured, e.g.\ because they are of a simple form or {arise from} a simple symmetry, then a symplectic method captures the periodic pitchfork bifurcations exponentially well \cite[section 4.5, Prop.7]{numericalPaper}. For a non-symplectic scheme for this to happen either \textit{all} integrals must be of a special form or be coming from simple symmetries for the method to capture these automatically or we must enforce their preservation (e.g.\ by a projection step) increasing computational costs. 
These observations can be extended to all bifurcations which make use of a completely integrable phase portrait.

\begin{figure}
\begin{center}
\includegraphics[width=0.325\textwidth]{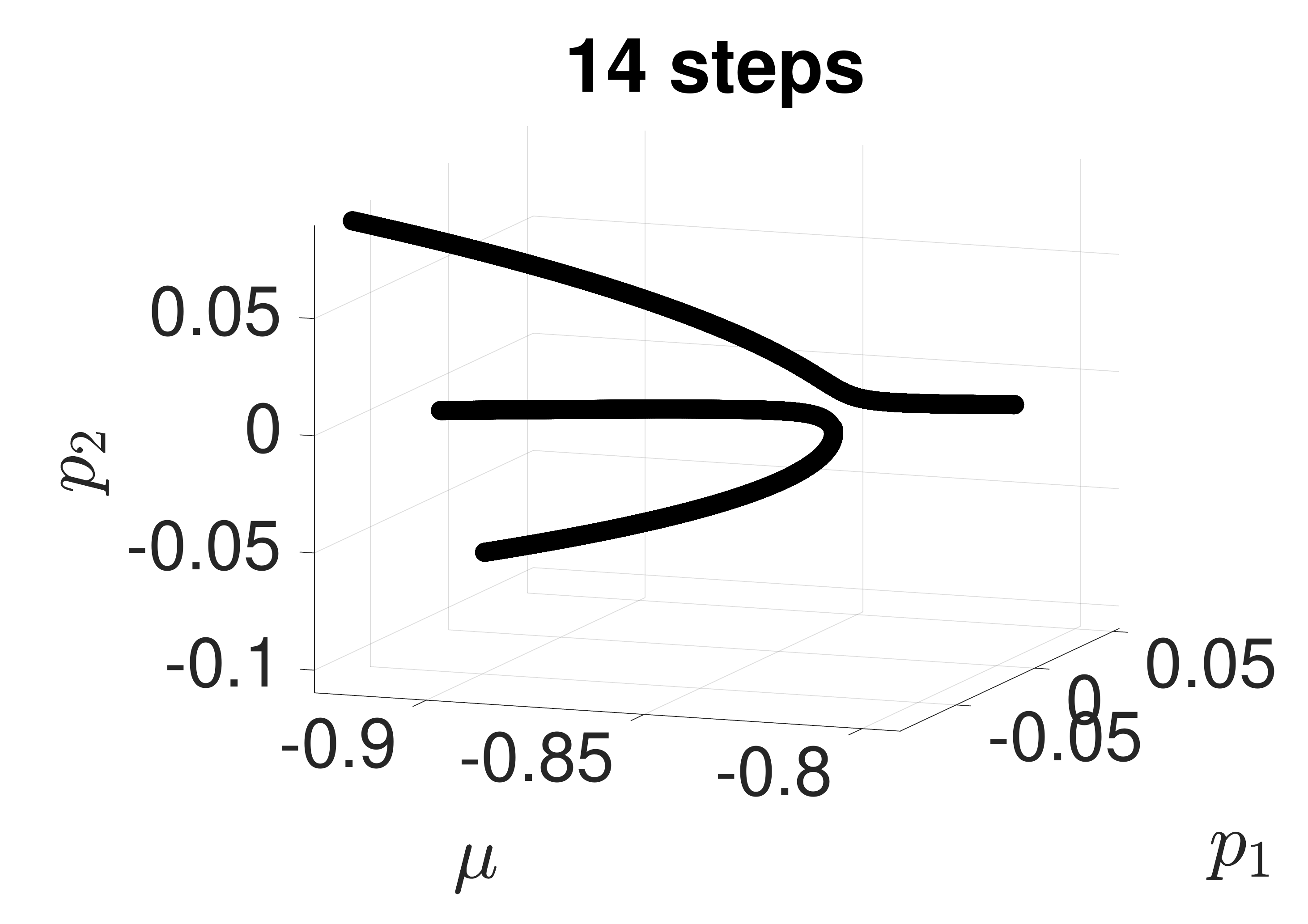}
\includegraphics[width=0.325\textwidth]{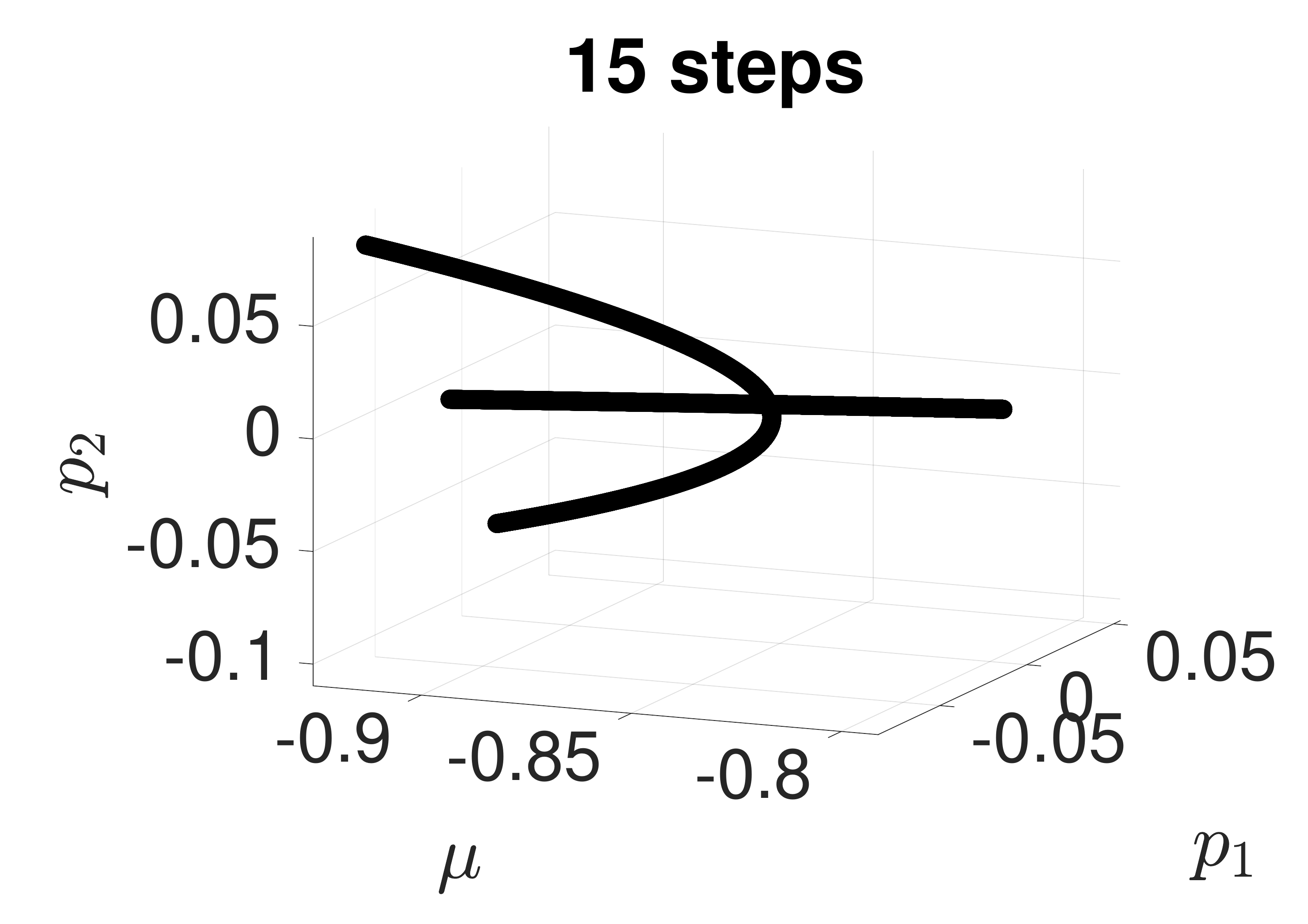}
\includegraphics[width=0.325\textwidth]{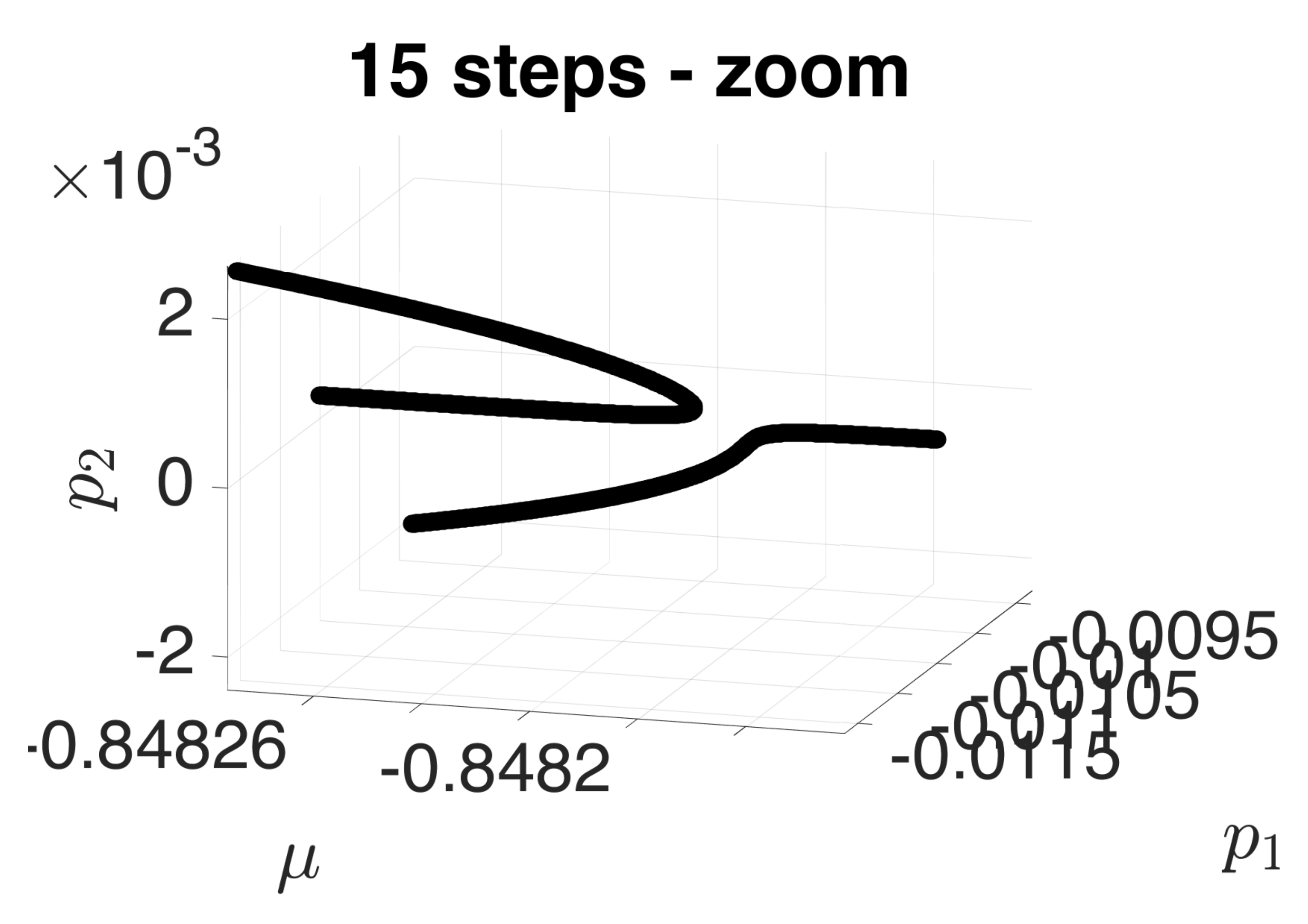}
\end{center}
\caption{Bifurcation diagrams obtained with the St\"ormer-Verlet method in a Hamiltonian system with an additional linear invariant. We consider ~\usebox{\smlmata} for
$\overline H_\mu(\overline q^1, \overline q^2,\overline p_1,\overline p_2)= (\overline q^1)^3+\mu \overline q^1 + \overline p_1 \overline p_2 + \overline p_1^2 + \frac 1 {10}(\overline p_1^3+\overline p_2^3)$ after a linear transformation with the co-tangent lifted action of the linear change of variables \usebox{\smlmatb}. While the break is clearly visible when 14 steps are used, it can only be spotted in a close-up when 15 steps are used.}\label{fig:PPcyclic1415newcoords}
\end{figure}

\begin{figure}
\begin{center}
\includegraphics[width=0.32\textwidth]{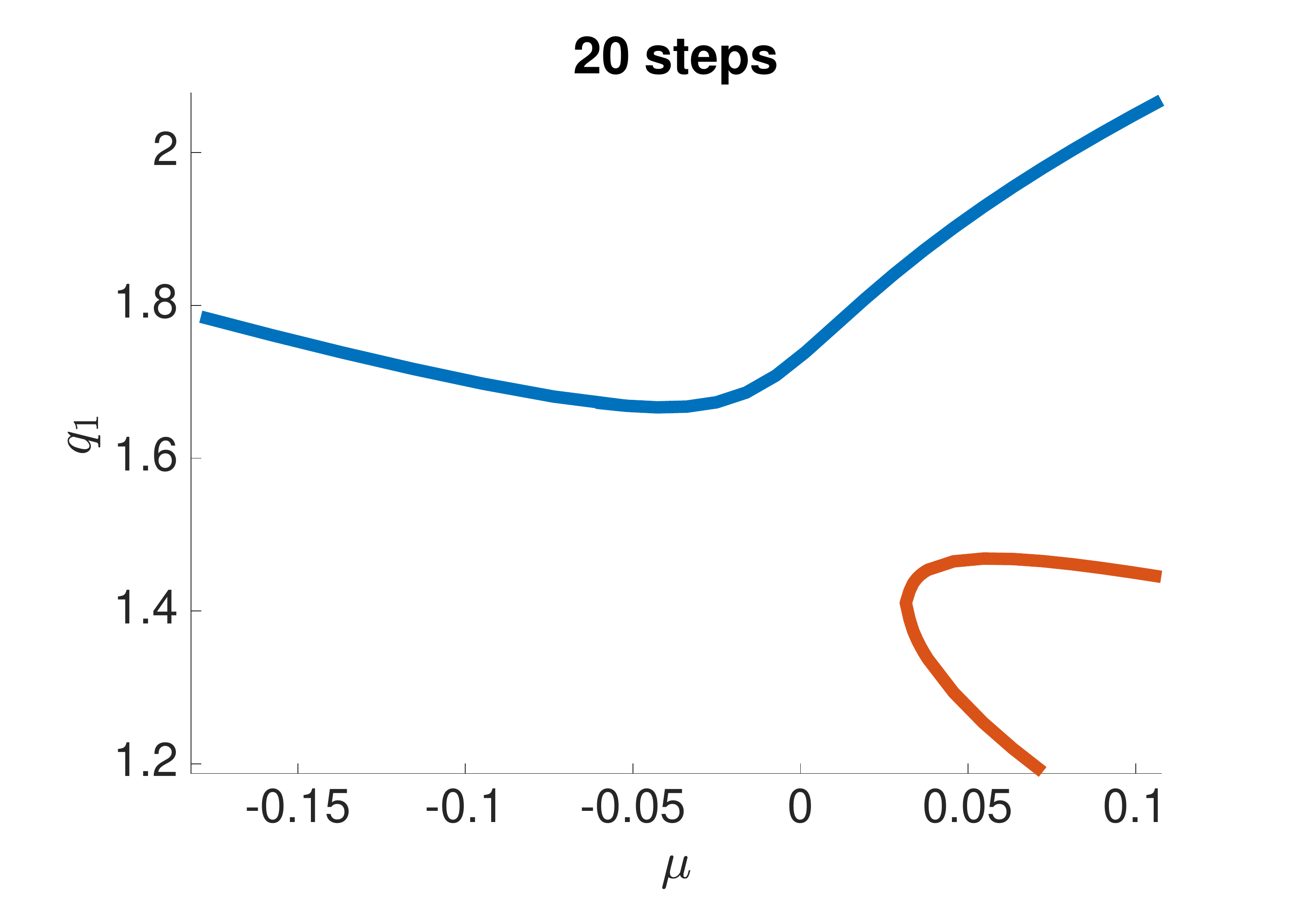}
\includegraphics[width=0.32\textwidth]{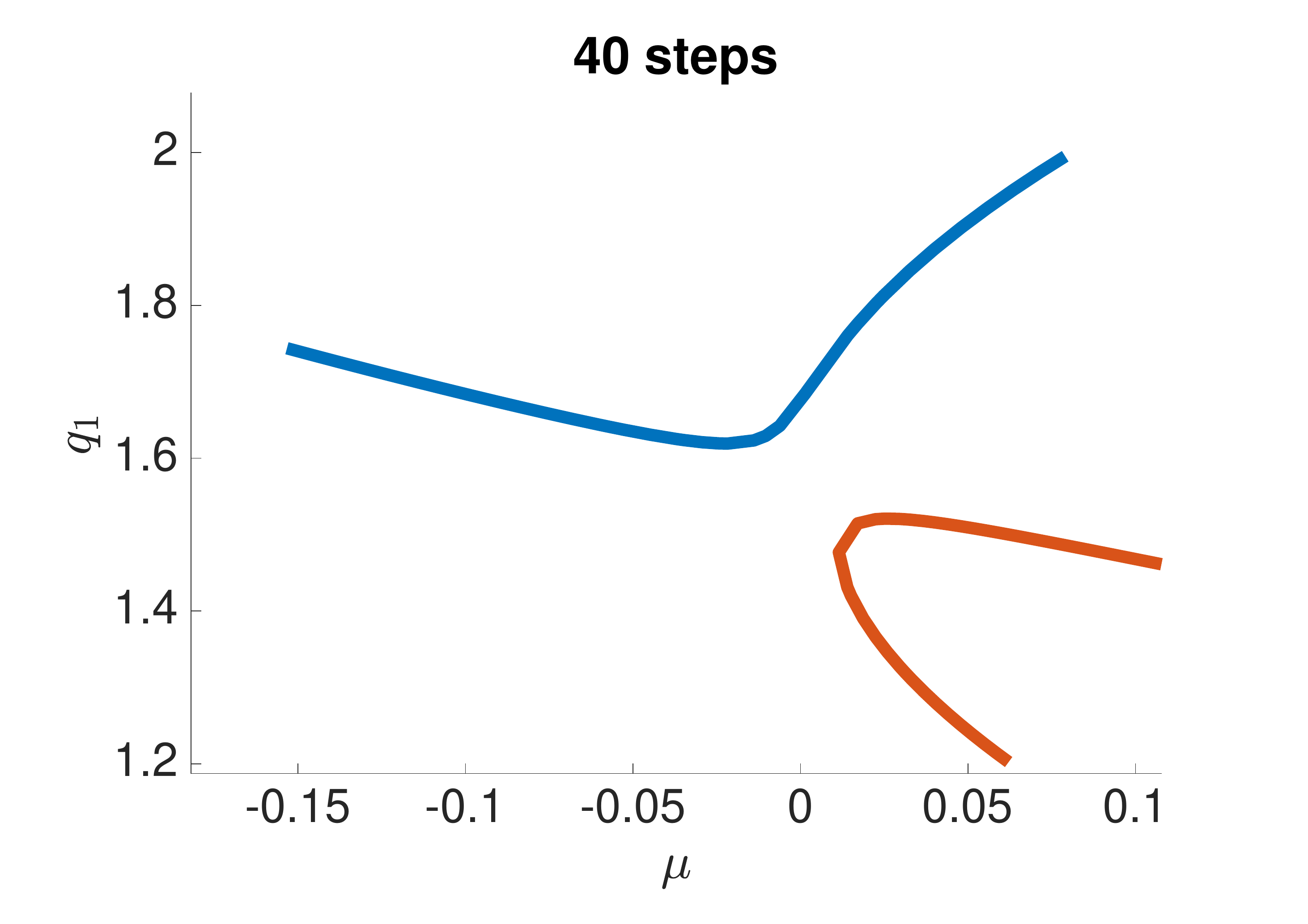}
\includegraphics[width=0.32\textwidth]{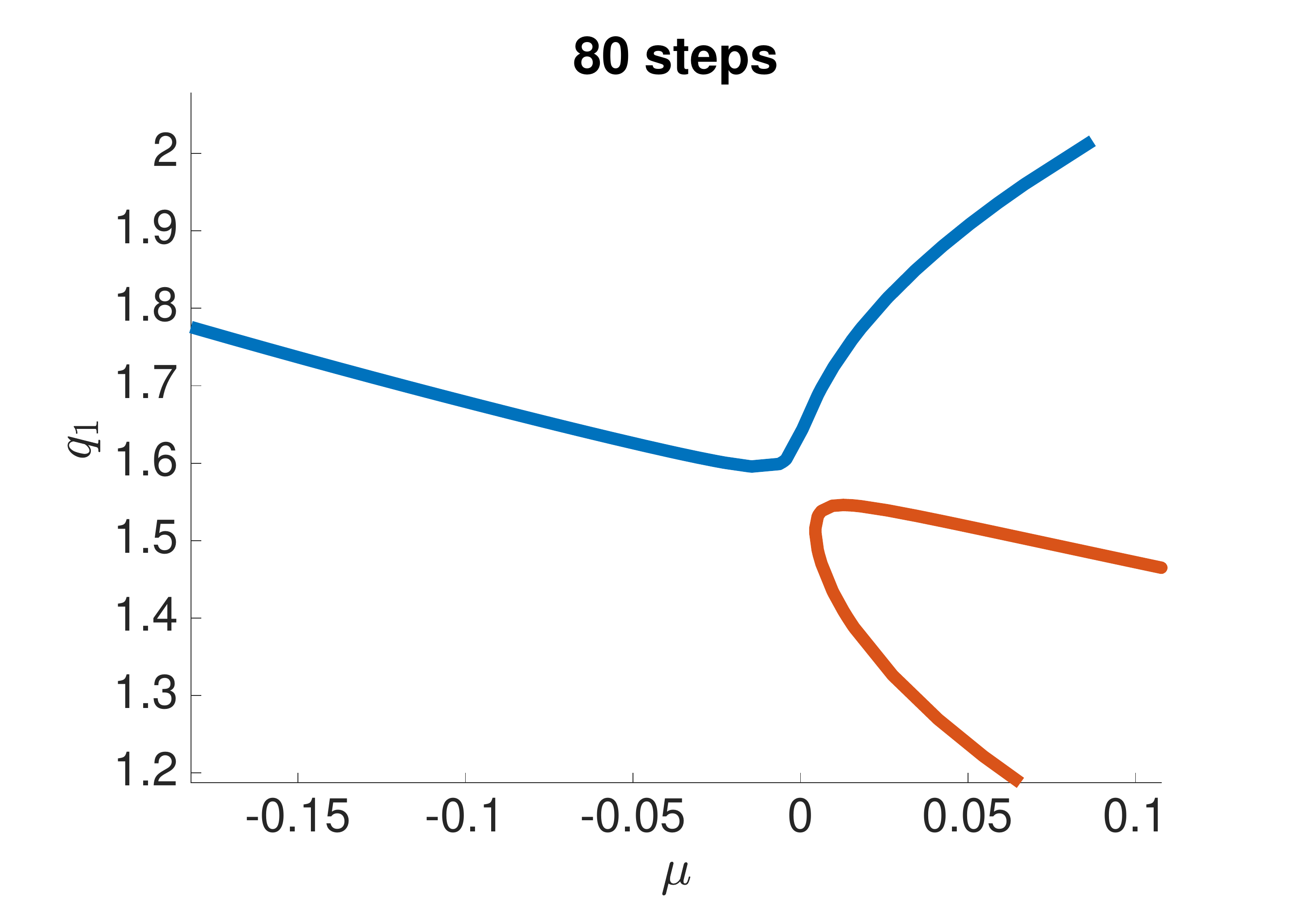}
\end{center}
\caption{Bifurcation diagrams obtained using the St\"ormer-Verlet method, projected along the $q_2$-axis for the problem \usebox{\smlmatc} for $\overline H_\mu (\overline q, \overline p) = \overline p_1^3 + \mu \overline p_1 + \overline p_2^2$ on $T^\ast(S^1\times S^1)$ transformed with the co-tangent lifted action of $\overline q_1 = q_1+0.1 \cos(q_2), \overline q_2 = q_2+0.1 \cos(q_1)$. The integrals $\overline p_1$ and $\overline p_2$ are nonlinear functions of $(q,p)$ and are \textit{not} preserved.}\label{fig:captureintegrablepitchfork}
\end{figure}

\section{Further remarks}\label{sec:Remarks}

\begin{remark}[non-uniform meshes]\label{rem:nonuniformmesh}
The bifurcations considered in section \ref{sec:brokenBifur} are connected to the symplecticity of the Hamiltonian flow but not to the preservation of the Hamiltonian. We can, therefore, use a symplectic integrator together with a fixed but not necessarily uniform grid for the integration of Hamiltonian ODEs to preserve the bifurcations. In contrast, the periodic pitchfork bifurcation analysed in section \ref{sec:capturePitchfork} is related to the energy preservation of the Hamiltonian flow. To make use of the excellent energy behaviour of symplectic integrators using a uniform mesh is essential.
\end{remark}


\begin{remark}[Conjugate symplectic methods]\label{ref:conjsympl}
{
Numerical methods are called {\em conjugate} if their numerical flows $\Phi_h$ and $\Psi_h$ are related by a change of coordinates $\chi_h$ such that $\Phi_h = \chi_h \circ \Psi_h \circ \chi_h$.
An example is the trapezoidal rule which is conjugate to the implicit midpoint rule \cite[VI.8]{GeomIntegration}. 
Numerical flows obtained using a conjugate symplectic method preserve a nearby symplectic form and share the excellent energy behaviour with flows obtained by a symplectic method because the flows are conjugate. To capture periodic pitchfork bifurcations, they are just as good as symplectic methods.
In contrast, this is {\em not} true for the higher gradient-zero bifurcations considered in section \ref{sec:brokenBifur} because (unless in a degenerate situation) the boundary condition will not be Lagrangian w.r.t.\ the modified symplectic form such that the bifurcations in the numerical system are broken up to the order of accuracy.
Some methods are conjugate to a symplectic method up to some (high) order. An example is Labatto IIIA, which is conjugate symplectic up to order 6 \cite{ConjSymplBseriesHairer2012}.
If a $k$-order method is conjugate symplectic up to a high order $r>k$ then it will behave as good as an $r$-order scheme in resolving the periodic pitchfork bifurcation but will show order $k$ broken $D$-series bifurcations in generic Hamiltonian boundary value problems.
}
\end{remark}

\begin{acknowledgements}
We thank Peter Donelan, Bernd Krauskopf, Hinke Osinga and Gemma Mason for useful discussions. This research was supported by the Marsden Fund of the Royal Society Te Ap\={a}rangi.
\end{acknowledgements}

\bibliographystyle{spmpsci}      
\bibliography{resources}   

%
%

\end{document}